\documentclass[11pt]{article}
\usepackage{geometry}                % See geometry.pdf to learn the layout options. There are lots.
\geometry{a4paper}                   % ... or a4paper or a5paper or ... 
\usepackage[dvipdfmx]{graphicx}
\usepackage{amsmath,amssymb,amsthm}
\makeatletter
\@addtoreset{equation}{section}

\makeatother
\DeclareGraphicsRule{.tif}{png}{.png}{`convert #1 `dirname #1`/`basename #1 .tif`.png}
\newtheorem{theo}{Theorem}
\newtheorem{defi}{Definition}
\newtheorem{lemm}{Lemma}
\newtheorem{coro}{Corollary} 

%%%%%%%%%%%%%%%%%%%%%%%%%%%%%%%%%%%%%%%%
\title{Solovay reduction and continuity}
\author{Masahiro Kumabe${}^{1}$, Kenshi Miyabe${}^{2}$, Yuki Mizusawa${}^{3}$\footnote{The corresponding author.}, Toshio Suzuki${}^{4}$\footnote{This work was supported by JSPS KAKENHI
Grant Number JP16K05255.}
\\ 
{\small 1: Faculty of Liberal Arts, The Open University of Japan, Japan}
\\
kumabe@ouj.ac.jp
\\
{\small 2: School of Science and Technology, Meiji University, Japan}
\\
miyabe@meiji.ac.jp
\\
{\small 3,4: Department of Mathematical Sciences, Tokyo Metropolitan University, Japan}
\\
houji6@gmail.com, toshio-suzuki@tmu.ac.jp
}
%\date{}                                           % Activate to display a given date or no date

\begin{document}
\maketitle

\begin{abstract}
The objective of this study is a better understanding of the relationships between 
reduction and continuity. 
Solovay reduction is a variation of Turing reduction based on the distance of two real numbers. 
We characterize Solovay reduction by the existence of a certain real function that is computable (in the sense of computable analysis) and Lipschitz continuous. 
We ask whether there exists a reducibility concept that corresponds to H\"{o}lder continuity. The answer is affirmative. 
We introduce quasi Solovay reduction and characterize this new reduction via H\"{o}lder continuity. In addition, we separate it from Solovay reduction and Turing reduction and investigate the relationships between complete sets and partial randomness. 
\end{abstract}

\emph{2010 Mathematics Subject Classification} 03D78 (primary), 68Q30

\emph{Keywords}: Solovay reducibility, Lipschitz continuity, H{\"o}lder continuity, computable analysis

%%%%%%%%%%%%%%%%%%%%%%%%%%%%%%%%%%%%%%%%%%%%%%%%%%%%%%%%%%%%%%%%%
\section{Introduction}
%%%%%%%%%%%%%%%%%%%%%%%%%%%%%%%%%%%%%%%%%%%%%%%%%%%%%%%%%%%%%%%%%

We would like to get a better understanding of the relationships between 
reduction and continuity. 

Suppose that $\alpha$ and $\beta$ are left-c.e. real numbers. A precise definition is as follows. 

%%%%%%%%%%%%%%%%%%%%%%%%%%%%%%%%
\begin{defi}
Suppose that $\alpha$ is a real number. We say $\alpha$ is \emph{left-c.e.} if the left cut of $\alpha$, $L(\alpha) := \{q \in \mathbb{Q} | q < \alpha\}$, is computably enumerable. 
\end{defi}
%%%%%%%%%%%%%%%%%%%%%%%%%%%%%%%%

Observe that there exists a function $f :(-\infty, \beta) \to (-\infty, \alpha)$ of the following properties. 
\begin{itemize}
\item $f$ is computable in the sense of Weihrauch \cite{We2000}. (We will review the definition in section 2.)
\item ~ $\{ f(x) : x < \beta \}$ is cofinal in $(-\infty, \alpha)$. 
To be more precise, for any $y < \alpha$ there exists $x < \beta$ such that 
$y \leq f(x)$. 
\item $f$ is nondecreasing.
\end{itemize}

A sketch of the proof is as follows. Take computable sequences of rationals such that $\{ a_{n} \} \nearrow \alpha$ and $\{ b_{n} \} \nearrow \beta$. Define points $Q_{n}$ ($n \in \mathbb{N}$) on $\mathbb{R}^2$ as to be $Q_{n} (b_{n}, a_{n})$. We make a line graph by connecting $Q_{n}$ and $Q_{n+1}$ for each $n$. 
Let $f$ be the function whose graph is the line graph. Then $f$ has the properties. 

%A computable function in the sense of Weihrauch is computable. Therefore, the above-mentioned function $f$ is computable. 

Suppose we add a requirement that $f$ is Lipschitz continuous to the set of properties mentioned above. It is interesting to note that the extended set of properties is exactly equivalent to the assertion that $\alpha$ is Solovay reducible to $\beta$. We show this equivalence in section 4. Here, Solovay reduction is a reduction between real numbers, and has been deeply studied (\cite[Chapter 9]{DH2010}). Solovay reduction implies Turing reduction, but the converse implication does not hold. Solovay reduction has a connection to the theory of randomness. For example, among left-c.e. reals, completeness with respect to Solovay reduction agrees with 1-randomness. 

We are interested in finding more examples that show the correspondences between 
various continuity concepts and reducibility concepts. 
In classical analysis, H{\"o}lder continuity is one of the well-known continuity concepts. 

\begin{defi}
For $X \subset \mathbb{R}$ and $Y \subset \mathbb{R}$,$f:X \rightarrow Y$ is H{\"o}lder continuous if there exist positive real numbers $H$ and $\xi \leq 1$ such that for any $x_{1}: x_{2} \in X$, the following holds, 
\begin{equation}
|f(x_{1}) - f(x_{2})| < H |x_{1} - x_{2}|^{\xi}
\end{equation}
where the exponent $\xi$ is called the \emph{order}.
\end{defi}

Throughout the paper, unless otherwise specified, we let H{\"o}lder continuity denote that with positive order $\leq 1$. 
In the case where the domain of the function is a closed interval of the real line, 
Lipschitz continuity implies H{\"o}lder continuity,  
and H{\"o}lder continuity implies uniform continuity. 

In the definition of H{\"o}lder continuity, the key quantity is the power of the distance of two real numbers. 
In the context of the study of randomness, 
an important previous work on this quantity is Tadaki's $T$-convergence. 
By developing his study on partial randomness \cite{T2002}, Tadaki investigated convergence by paying attention to $T$th power of difference of real numbers \cite{T2009}, where $T$ is a positive real number $\leq 1$. Another previous work relating to the present paper is the study on partial randomness and Solovay reduction by Miyabe et al. \cite{MNS2018}. 

Our main question is as follows. 
Is there a reducibility that exactly corresponds to H{\"o}lder continuity? 
As far as the authors know, there is no previous work that asks this question as such. 
The answer is affirmative. In this paper, we present such a reduction concept and call it quasi Solovay reducibility. 

We give definitions of  our main notion qS-reducibility and several notions that are necessary to discuss in Section 2. In Section 3, we show several properties of qS-reducibility. Especially, we show qS-reducibility is separated from Solovay reducibility and Turing reducibility. In  Section 4, we discuss relationship of reducibility and continuity. 

%%%%%%%%%%%%%%%%%%%%%%%%%%%%%%%%%%%%%%%%%%%%%%%%%%%%%%%%%%%%%%%%%
\section{Basic definitions and the background}
%%%%%%%%%%%%%%%%%%%%%%%%%%%%%%%%%%%%%%%%%%%%%%%%%%%%%%%%%%%%%%%%%

%%%%%%%%%%%%
\subsection{Computable real numbers and computable real functions}
%%%%%%%%%%%%

We let $\mathbb{N}, \mathbb{Q}$ and $\mathbb{R}$ denote the set of all natural numbers, rational numbers and real numbers, respectively.
The set of all binary strings of finite length is denoted by $\{ 0,1 \}^{\ast}$. 

%%%%%%%%%%%%%%%%%%%%%%%%%%%%%%%%
In classical recursion theory, unless otherwise specified, a computable function is a computable mapping from natural numbers to natural numbers. 
We are going to investigate computable real functions as well, namely, a computable mapping from real numbers to real numbers. 
In general, we cannot express a real number by a finite string. A name of a real number is given by, for example, a certain infinite sequence of rational numbers. 
Thus in a suitable definition of computable real functions, the usual Turing machine would be insufficient as a model of computation. 
A number of previous studies have prospected suitable definitions. 
Among them, Ko and Friedman \cite{KF1982} introduced a definition based on an oracle Turing machine. In this approach, roughly speaking, computability of a real function $f$ is defined in the following way. An independent real variable $x$ is considered as an oracle function; in other words, an idealized library function. 
We consider the mapping of $0^{n} = 0 \cdots 0$ ($n$ times, where $n$ is a natural number)  to a rational number $q$ approximating $f(x)$ with error at most $2^{-n}$. 
In general, we do not require computability of $x$. 
We require that for each $x$ in the domain of $f$, the above-mentioned mapping of $0^{n}$ to $q$ is computable using $x$ as an oracle, where the algorithm is uniform in the sense that it depends only on $f$ and is independent from $x$. 

A variation of the Ko-Friedman style definition is precisely given in the textbook by 
Weihrauch \cite{We2000}. An equivalent definition is given in \cite{KTZ2018}. 

\begin{defi} \cite{G1957,KF1982,KTZ2018} 
Suppose that $x$ is a real number. 
\begin{enumerate}
\item A \emph{name} of $x$ is a function $\varphi : \{ 0,1 \}^{\ast} \to \{ 0,1 \}^{\ast}$ 
such that for each string $u$, letting $n$ be the length of $u$, $\varphi(u)$ is a binary encoding of an integer $z$ such that $| x - z/2^{n} | \leq 2^{-n}$. 
\item $x$ is a \emph{computable real number} if it has a computable name. 
\end{enumerate}
\end{defi}

Suppose that $x$ is a real number with $0 \leq x \leq 1$. It is well-known that the following (i) and (ii) hold. (i) Any real number $x$ has infinitely many names. 
(ii) $x$ is computable in the above-mentioned sense 
if and only if there is a total computable function $f :\mathbb{N} \to \{ 0,1 \}$ such that 
$0.f(0) f(1) f(2) \cdots$ is a binary expansion of $x$. 
(Proof: (i) For each natural number $n$, there exists a  non-negative integer $v < 2^{n}$ such that $v \leq 2^{n}x \leq v+1$. We may choose either $v$ or $v+1$ as the value of $\varphi (0^{n})$. (ii) $\Leftarrow$ is obvious. If $x$ is rational, $\Rightarrow$ is obvious. Thus in the proof of $\Rightarrow$, we may assume that $x$ is irrational. We can compute $f(0)$ as follows. Find the least natural number $n$ such that either $1/2 < \varphi (n)/2^{n} - 1/2^{n}$ or $\varphi (n)/2^{n} + 1/2^{n} < 1/2$. Since $x \ne 1/2$, we can effectively find such $n$. We let $f(0)=1$ in the former case and $f(0)=0$ in the latter case. In the same way, we can effectively compute $f(1),f(2),\dots$.)

Several types of oracle Turing machines have been discussed in the literature. Here, we consider an oracle Turing machine with a function oracle. 
An \emph{oracle Turing machine} is a Turing machine equipped with a particular read-write tape called an \emph{oracle tape}, and the particular state $q_{\text{query}}$ called the \emph{query state}. 
Suppose that $h$ is a mapping from strings to strings. 
An oracle Turing machine $M$ with an oracle $f$, denoted by $M^{f}$, is similar to the usual Turing machine that computes a function. It gets an input string from a certain tape, and it outputs a string on a certain tape. However, the action of $M^{f}$ differs from those of the usual machine in the following points. When $M$ enters the query state, $M$ writes a string, say $u$, on the oracle tape. The action of writing is regarded as $|u|$ time-steps, where $|u|$ is the length of $u$. Then $u$ is replaced by the string $h(u)$, and $M$ enters another state. This replacement is regarded as one time-step. When $M$ reads $h(u)$,  we count time-steps in the same way as we have done at the writing action. 

An outline of the definition in \cite{We2000} is as follows. This would be sufficient for our purpose. For more rigorous treatment, consult \cite{We2000}.

\begin{defi} \cite{KTZ2018} 
Suppose that $f$ is a partial function from $\mathbb{R}$ to $\mathbb{R}$. 
The function $f$ is \emph{computable} if there exists an oracle Turing machine $M$ with the following property. For each $x$ in the domain of $f$ and for each name $\varphi$ of $x$, $M^{\varphi}$ computes a name of $f(x)$. 
\end{defi}

By saying that \emph{a real function is computable in the sense of Weihrauch}, we mean the above-mentioned sense. 

%%%%%%%%%%%%%%%%%%%%%%%%%%%%%%%%

%%%%%%%%%%%%
\subsection{Left-c.e. real numbers and partial randomness}
%%%%%%%%%%%%

Unless otherwise specified, 
$\{ a_{n} \} \nearrow \alpha$ denotes that $\{ a_{n} \}$ is a computable nondecreasing sequence of rationals converging to $\alpha$. For a function $f$, the symbol $f(x) \downarrow$ denotes that $f$ is defined on $x$. For a Turing machine $M$, the symbol $M(x) \downarrow$ denotes that $M$ terminates in a finite step for input $x$. 
A set $A$ of natural numbers is computably enumerable, c.e. for short, if there exists a Turing machine $M$ such that $A=\{ n \in \mathbb{N} : M(n) \downarrow \}$. 
The concept of c.e. sets is naturally generalized to a set of rational numbers, or to a set of binary strings, under a certain coding.

Although the concept of computable real numbers is quite natural, 
the class of all computable real numbers is too narrow in the field of algorithmic randomness, for any computable real number is not random. 
A fruitful relaxed class is the class of all left-c.e real numbers. 
As we defined in the Introduction section, a real number is left-c.e. if the left cut is computably enumerable. 
A typical example of a random real number is Chaitin's $\Omega$. 
For the detailed definitions of Chaitin's $\Omega$ and related concepts, consult standard textbooks of algorithmic randomness such as \cite{DH2010} and \cite{N2009}. 
It is well known that Chaitin's $\Omega$ is left-c.e. 

%%%%%%%%%%%%%%%%%%%%%%%%%%%%%%%%

Turing reducibility is a concept that compares the complexity of two functions $f, g : \mathbb{N} \to \{ 0, 1 \}$. $f$ is \emph{Turing reducible} to $g$ ($f \leq_{T} g$ in symbol) if there exists an oracle Turing machine $M$ such that for each natural number $n$, $M^{g}$ halts for input $n$ and outputs $f(n)$. 
Turing reduction between two sets of natural numbers means Turing reduction between their characteristic functions. 
For real numbers $\alpha$ and $\beta$, let $0.f(1)f(2)\cdots$ and $0.g(1)g(2)\cdots$ be their binary expansions such that in each expansion, 0 has infinitely many occurrences. 
It is easy to see that $f$ is Turing reducible to $g$ if and only if the left cut of $\alpha$ is Turing reducible to the left cut of $\beta$. 
Turing reduction between two real numbers is defined as this meaning, that is, Turing reduction between their left cuts. If $\alpha \leq_{T} \beta$, we also say $\alpha$ is \emph{$\beta$-computable}. 

In the theory of computing, not a few variations of Turing reducibility have been introduced. 
 Interestingly enough, many important reducibility concepts have common properties that are known as standard in the following sense. 

%%%
\begin{defi} \label{defi:std-red}
\cite[Chapter 9]{DH2010} The reducibility $r$ is \emph{standard} if the following hold.
\begin{enumerate}
\item $r$ is $\Sigma_{3}^{0}$.
\item Every computable real is reducible to any given left-c.e.real. 
\item Real addition is a join in the r-degrees of left-c.e.reals.
\item For any left-c.e. real $\alpha$ and any rational $q > 0$, we have $\alpha \equiv_{r} q\alpha$.
\end{enumerate}
The condition (3) of the above definition means that the least upper bound of the r-degrees of the real $\alpha$ and r-degree of the real $\beta$ is given by the r-degree of the real $\alpha + \beta$.
\end{defi}
%%%

Turing reduction is of course a standard reducibility. 
Another important example of standard reducibilities is Solovay reducibility. 

%%%
\begin{defi} \label{defi:S-qS}
Suppose that $\alpha$ and $\beta$ are real numbers. 
\begin{enumerate}
\item \cite[Chapter 9]{DH2010} $\alpha \leq_{S} \beta$ ($\alpha$ is \emph{Solovay reducible} to $\beta$) if there exist a partial computable function $f$ from $\mathbb{Q}$ to $\mathbb{Q}$ and a positive natural number $d$ with the following property. For any rational $x < \beta$, we have $f(x) \downarrow < \alpha $ and 
$\alpha - f(x) < d (\beta - x)$.
\item $\alpha \leq_{qS} \beta$ ($\alpha$ is \emph{quasi Solovay reducible} to $\beta$) if there exist a partial computable function $f$ from $\mathbb{Q}$ to $\mathbb{Q}$ and positive natural numbers $d,\ell$ with the following property. For any rational $x < \beta$, we have $f(x) \downarrow < \alpha $ and 
and $(\alpha - f(x))^{\ell} < d (\beta - x)$ 
(in other words, $(\alpha - f(x)) < H (\beta - x)^{1/\ell}$, where $H=d^{1/\ell}$). 
\end{enumerate}
\end{defi}
%%%

Even if an infinite binary sequence $\alpha = a_{0} ~ a_{1} ~ a_{2} ~ a_{3} \cdots$ is Martin-L\"{o}f random, $\beta = a_{0} ~ 0 ~ a_{1} ~ 0 ~ a_{2} ~ 0 ~ a_{3} \cdots$ is not Martin-L\"{o}f random. However, it is the natural to regard $\beta$ as a partial random sequence.  Some important investigations in the earlier stage of partial randomness are in \cite{T2002}. A short summary of the research in this line may be found in \cite{St2005}. Some generalizations of partial randomness concepts has been discussed in \cite{HHSY2013}. 
Here, we review the following terminology from \cite{T2002}. 

%%%
\begin{defi}
\cite{T2002} Let $T \in (0,1]$ be a real number. A Real number $\alpha$ is \emph{weakly Chaitin $T$-random} if 
\begin{equation}
\forall n \in \mathbb{N}^{+}[ Tn \leq^{+} K(\alpha \upharpoonright_n) ]
\end{equation}
\end{defi}
%%%

%%%
\begin{defi}
\cite{T2002} Let $T \in (0,1]$ be a real number. A Real number $\alpha$ is \emph{$T$-compressible} if 
\begin{equation}
K(\alpha \upharpoonright_n) \leq Tn + o(n)
\end{equation}
\end{defi}
%%%

%%%%%%%%%%%%
\subsection{Our notation}
%%%%%%%%%%%%

Here, we introduce new symbols. 

%%%%%%%%%%%%%%%%%%%%%%%%%%%%%%%
\begin{defi} Suppose that $\alpha$ and $\beta$ are real numbers. 
\begin{enumerate}
%%%%%%%%%%%
\item $(L)_{1}$ denotes the assertion that there exists a function $f :(-\infty, \beta) \to (-\infty, \alpha)$ of the following properties.
\begin{enumerate}
\item $f$ is computable in the sense of Weihrauch \cite{We2000}. 
\item $f$ is Lipschitz continuous. To be more precise, there exists a positive real number $L$ such that for any $x_{1}, x_{2} < \beta$, 
$|f(x_{1}) - f(x_{2})| < L |x_{1} - x_{2}|$. 
\item ~ $\{ f(x) : x < \beta \}$ is cofinal in $(-\infty, \alpha)$. 
To be more precise, for any $y < \alpha$ there exists $x < \beta$ such that 
$y \leq f(x)$. 
\item $f$ is nondecreasing.
\end{enumerate}
%%%%%%%%%%%
\item Suppose that 
there exists a function $f :(-\infty, \beta) \to (-\infty, \alpha)$ satisfying (a), (b), (c) above and in addition (e) of the following. 

\medskip

(e) There exists a strict increasing sequence of rationals $\{ r_{n} \}$ such that $r_{n} \to \beta - 0$ and $f(r_{n}) \to \alpha - 0$. Here, the sequence $\{ r_{n} \}$ may be non-computable.

\medskip

Then the resulting assertion is denoted by $(L)_{2}$. 
Note that $(L)_{2}$ is equivalent to (a) + (b) + (e). 
%%%%%%%%%%%
\item Suppose that 
there exists a function $f :(-\infty, \beta) \to (-\infty, \alpha)$ satisfying (a), (c), (d)  and in addition (bH) of the following. 

\medskip

(bH) $f$ is H{\"o}lder continuous with the positive order $< 1$. To be more precise, there exists a positive real numbers $H$ and $\xi$ such that $\xi < 1$ and for any $x_{1}, x_{2} < \beta$, 
$|f(x_{1}) - f(x_{2})| < H |x_{1} - x_{2}|^{\xi}$. 

\medskip

Then the resulting assertion is denoted by $(H)_{1}$.
%%%%%%%%%%%
\item Suppose that 
there exists a function $f :(-\infty, \beta) \to (-\infty, \alpha)$ satisfying (a), (bH), (c) and (e). 
Then the resulting assertion is denoted by $(H)_{2}$. 
Note that $(H)_{2}$ is equivalent to (a) + (bH) + (e).
\end{enumerate}
\end{defi}
%%%%%%%%%%%%%%%%%%%%%%%%%%%%%%%%
%Remark: In the seminar of August 22 at TMU, we investigated assertions $(L)_{2}$ and $(H)_{2}$. Here, we omit them. 
%%%%%%%%%%%%%%%%%%%%%%%%%%%%%%%%
A real number in the unit interval is, by taking its binary expansion in which 0 has infinitely many occurrences, often identified with an infinite binary sequence. In this case, we need to be aware of the following point. 
Suppose that we know that $|\beta - \alpha|$, the geometrical distance of two real numbers $\alpha$ and $\beta$ in the real line, is at most $2^{-n}$. In general, this assumption does not imply agreement between the first $n$ bits of $\alpha$ and those of $\beta$ (their binary expansions in the above-mentioned style). 
For example, observe the case where $\alpha = 0.0011111\dot{0}$, $\beta =0.01\dot{0}$ and $n=7$. 
This delicate relationship between geometrical distance and agreement of bits 
sometimes appear as an obstacle in the study of Solovay reduction. In order to avoid such an obstacle, we introduce the following two sets of numbers, $\mathbb{D}^{\prime}$ and $\mathbb{R}^{\prime}$, as follows. 
%%%%%%%%%%%%%%%%%%%%%%%%%%%%%%%%%
\begin{defi}
$\mathbb{D}'$ is the set of all rationals $q~(0 < q < 1)$ with the following properties. 
\begin{enumerate}
\item $q$ is a dyadic rational (a dyadic rational is a rational number of the form $q = z2^{-n}$ where $z \in \mathbb{Z}$ and $n \in \mathbb{N}$ \cite[Chapter 1]{N2009}).
\item $q$ has binary expansion $q=0.q_1q_2 \cdots q_{2k-1}q_{2k}$, where for each $i(=1,\cdots,k)$, $q_{2i} = 1 - q_{2i-1}$
\end{enumerate}
For each element $q \in \mathbb{D}'$, let $k(q)$ denote the above-mentioned $k$. 
\end{defi}
%%%%%%%%%%%%%%%%%%%%%%%%%%%%%%%%
For example, $0.1010$ is in $\mathbb{D}'$ where $k(0.1010) = 2$. On the other hand, $0.1110$ is not in $\mathbb{D}'$.
%%%%%%%%%%%%%%%%%%%%%%%%%%%%%%%%
\begin{defi}
$\mathbb{R}'$ is the set of all reals $\beta$ with the following properties. 
\begin{enumerate}
\item $\beta$ is not rational.
\item $\beta$ has a binary expansion $\beta = 0.b_1b_2\cdots$ with the following property. 
\begin{equation}
\forall n \in \mathbb{N}^{+} [ 0.b_1b_2 \cdots b_{2n-1}b_{2n} \in \mathbb{D}' ].
\end{equation}
\end{enumerate}
\end{defi}
%%%%%%%%%%%%%%%%%%%%%%%%%%%%%%%%

%%%%%%%%%%%%%%%%%%%%%%%%%%%%%%%%%%%%%%%%%%%%%%%%%%%%%%%%%%%%%%%%%
\section{The relationships among the reducibilities}
%%%%%%%%%%%%%%%%%%%%%%%%%%%%%%%%%%%%%%%%%%%%%%%%%%%%%%%%%%%%%%%%%

There are some known characterizations of Solovay reducibility by means of sequences. 
A characterization by Calude et al. \cite{CCHK1999} (see also \cite[Proposition 9.12]{DH2010}) 
can be generalized to quasi Solovay reduction as follows. 

%%%%%%%%%%%%%%%%%%%%%%%%%%%%%%%%
\begin{lemm} Suppose that $\alpha$ and $\beta$ are left-c.e. reals. Then the following are equivalent. 
%%%
\begin{enumerate}
\item $\alpha \leq_{qS} \beta$.
\item For every $\{ a_{n} \}\nearrow \alpha$ and $\{ b_{n} \}\nearrow \beta$, 
there exist an increasing computable function $g :\mathbb{N} \to \mathbb{N}$ and positive integers $d$ and $\ell$ such that for each $n \in \mathbb{N}$, the following holds. 
\begin{equation}
(\alpha - a_{g(n)})^{\ell} \leq d(\beta - b_n).
\end{equation}
\item For every $\{ b_{n} \}\nearrow \beta$, there exist $\{ a_{n} \}\nearrow \alpha$ and positive integers $d$ and $\ell$ such that for each $n \in \mathbb{N}$, the following holds.
\begin{equation}
(\alpha - a_n)^{\ell} \leq d(\beta - b_n).
\end{equation}
\item There exist $\{ a_{n} \}\nearrow \alpha$, $\{ b_{n} \}\nearrow \beta$ and positive integers $d$ and $\ell$ such that for each $n \in \mathbb{N}$, the following holds. 
\begin{equation}
(\alpha - a_n)^{\ell} \leq d(\beta - b_n).
\end{equation}
\end{enumerate}
%%%
\end{lemm}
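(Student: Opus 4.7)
The plan is to prove the cyclic chain of implications $(1) \Rightarrow (2) \Rightarrow (3) \Rightarrow (4) \Rightarrow (1)$, mirroring the classical proof of the analogous characterization for Solovay reducibility (Calude et al.) but tracking the extra exponent $\ell$.

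For $(1) \Rightarrow (2)$, I would fix arbitrary $\{a_n\}\nearrow\alpha$ and $\{b_n\}\nearrow\beta$, and use the function $f$ together with $d,\ell$ from the qS-reduction. Set $y_n := f(b_n)\in\mathbb{Q}$, so $y_n<\alpha$ and $(\alpha-y_n)^{\ell}<d(\beta-b_n)$. I then define $g$ recursively by letting $g(n)$ be the least $m>g(n-1)$ with $a_m\geq y_n$. Such $m$ exists because $a_m\to\alpha>y_n$, and the search is effective since both $a_m$ and $y_n$ are rationals. Monotonicity gives $\alpha-a_{g(n)}\leq\alpha-y_n$, so $(\alpha-a_{g(n)})^{\ell}\leq d(\beta-b_n)$, which is the required bound with the same constants $d,\ell$.

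The implication $(2) \Rightarrow (3)$ is a substitution argument: given $\{b_n\}\nearrow\beta$, pick any auxiliary $\{c_n\}\nearrow\alpha$, apply (2) to obtain $g,d,\ell$, and set $a_n:=c_{g(n)}$, which is a computable nondecreasing sequence with limit $\alpha$ satisfying the desired inequality. The implication $(3) \Rightarrow (4)$ is immediate: choose any computable $\{b_n\}\nearrow\beta$ and invoke (3).

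The closing step $(4) \Rightarrow (1)$ goes as follows. Given $\{a_n\},\{b_n\},d,\ell$ from (4), define a partial computable $f:\mathbb{Q}\to\mathbb{Q}$ by letting $f(x)=a_{n(x)}$, where $n(x)$ is the least $n$ with $b_n>x$. The search for $n(x)$ halts precisely when $x<\beta$, so $f$ is partial computable with the correct domain. Since $b_{n(x)}>x$, the hypothesis yields $(\alpha-f(x))^{\ell}\leq d(\beta-b_{n(x)})<d(\beta-x)$, which is the non-strict version of the bound in Definition~\ref{defi:S-qS}(2).

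The one genuinely delicate point, and the main obstacle I expect, is the strict inequality $f(x)<\alpha$ required by Definition~\ref{defi:S-qS}(2) in the last step. If $\alpha$ is irrational every $a_n$ satisfies $a_n<\alpha$ automatically; if $\alpha$ is rational the sequence $\{a_n\}$ may become eventually equal to $\alpha$. In that case I would first replace $\{a_n\}$ by a strictly increasing variant $a'_n=a_n-\varepsilon_n$ with computable $\varepsilon_n>0$ going to $0$ fast enough that $(\alpha-a'_n)^{\ell}\leq d'(\beta-b_n)$ still holds after enlarging $d$ to a suitable $d'$; alternatively, one can separate the case of rational $\alpha$ and use the triviality that every computable real is qS-reducible to any left-c.e.\ real. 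Either route handles the strictness without disturbing the rest of the argument.
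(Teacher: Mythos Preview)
Your proof is correct and follows essentially the same cyclic scheme $(1)\Rightarrow(2)\Rightarrow(3)\Rightarrow(4)\Rightarrow(1)$ as the paper, with only cosmetic differences in indexing (in $(4)\Rightarrow(1)$ the paper picks $n$ with $b_n\le q<b_{n+1}$ and sets $f(q)=a_{n+1}$, which is your choice up to a shift). Your discussion of the strict inequality $f(x)<\alpha$ in the final step is in fact more careful than the paper, which passes over that point in silence.
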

%%%%%%%%%%%%%%%%%%%%%%%%%%%%%%%%
\begin{proof}
Proof of (1) $\Rightarrow$ (2): ~ 
Let $f: \mathbb{Q} \to \mathbb{Q}$, $d$ and $\ell$ be witnesses of $\alpha \leq_{qS} \beta$. 
We are going to define a mapping $g :\mathbb{N} \to \mathbb{N}$ by means of recursion. 
Given $n \in \mathbb{N}$, find the least $s \in \mathbb{N}$ such that 
$f(b_n) < a_s$ and $s > g(i)$ (for all $i < n$). Then we define $g(n)$ as to be this $s$. 

Proof of (2) $\Rightarrow$ (3): ~ 
Since $\alpha$ is left-c.e., there exists a sequence $\{ a_{n} \} \nearrow \alpha$. 
Take a $g :\mathbb{N} \to \mathbb{N}$ in the statement of assertion 2. 
The sequence $\{ a_{g(n)} \}$ is what we want. 

(3) $\Rightarrow$ (4) is obvious.

Proof of (4) $\Rightarrow$ (1): ~ 
Let $\{ a_{n} \}\nearrow \alpha$, $\{ b_{n} \}\nearrow \beta$, $d$ and $\ell$ be witnesses of assertion 4. For each rational number $q < \beta$, find a natural number $n$ such that 
$b_n \leq q < b_{n+1}$. Define $f(q)$ as to be $a_{n+1}$. 
Then it holds that $(\alpha - f(q))^{\ell} < d (\beta - q)$.
\end{proof}

%%%%%%%%%%%%%%%%%%%%%%%%%%%%%%%%
\begin{lemm} Suppose that $\leq_{qS}$ is the relation of left-c.e.reals. 
\begin{enumerate}
\item $\leq_{qS}$ is a pseudo order.
\item $\leq_{qS}$ is a standard reducibility. 
\end{enumerate}
\end{lemm}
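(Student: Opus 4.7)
My plan is to handle the pseudo-order and standardness properties separately, invoking the sequence characterization of the preceding Lemma for both.

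For (1), reflexivity is witnessed by the identity with $d=\ell=1$, and transitivity by composing witnesses: given $(f_1,d_1,\ell_1)$ for $\alpha\leq_{qS}\beta$ and $(f_2,d_2,\ell_2)$ for $\beta\leq_{qS}\gamma$, the composite $f_1\circ f_2$ together with parameters $\ell_1\ell_2$ and $d_1^{\ell_2}d_2$ witness $\alpha\leq_{qS}\gamma$, obtained by raising the inequality for $\alpha\leq_{qS}\beta$ to the $\ell_2$-th power and substituting the inequality for $\beta\leq_{qS}\gamma$.

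For (2), I would check the four clauses of the definition of standard reducibility in sequence. For the $\Sigma_3^0$ bound, apply item (4) of the preceding Lemma to express $\alpha\leq_{qS}\beta$ as an existential quantifier over two indices of computable sequences and two positive integers $d,\ell$, with matrix $\forall n\,[(\alpha-a_n)^\ell\leq d(\beta-b_n)]$. Both sides of the inner inequality are uniformly left-c.e.\ in $n$, and I would observe that the comparison $u\leq v$ of two left-c.e.\ reals is $\Pi_2^0$ (rewrite it as $\forall n\,\forall \varepsilon\in\mathbb{Q}_{>0}\,\exists m\, u_n-\varepsilon<v_m$), so the whole formula collapses to $\Sigma_3^0$. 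For the reducibility of every computable real to every left-c.e.\ real, note that any Solovay reduction is trivially a quasi Solovay reduction (take $\ell=1$), and invoke the well-known statement for $\leq_S$. For the join property, use item (4) once more: from $\{a_n\}\nearrow\alpha$, $\{b_n\}\nearrow\beta$ the inequality $\alpha-a_n\leq(\alpha+\beta)-(a_n+b_n)$ gives $\alpha,\beta\leq_{qS}\alpha+\beta$ with $d=\ell=1$; for the least-upper-bound direction, fix a common approximation $\{c_n\}\nearrow\gamma$ for both reductions $\alpha,\beta\leq_{qS}\gamma$, take $\ell=\max(\ell_1,\ell_2)$, and bound both $\alpha-a_n$ and $\beta-b_n$ by multiples of $(\gamma-c_n)^{1/\ell}$ (using $\gamma-c_n\leq 1$ for all but finitely many $n$ to align the two exponents), so that raising the sum to the $\ell$-th power yields the required inequality. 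Finally, for invariance under positive rational scaling $\alpha\equiv_{qS}q\alpha$, multiplying a witnessing approximation by $q$ rescales the inequality by $q^\ell$ on the left and $q$ on the right, which is absorbed into a new integer constant.

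The main delicate point is the $\Sigma_3^0$ level computation, which hinges on the observation that comparison of left-c.e.\ reals is $\Pi_2^0$ rather than $\Pi_1^0$. The join computation is the other place requiring any care, to merge reductions with different exponents, but once the $\max$ trick and the eventual bound $\gamma-c_n\leq 1$ are in place the rest is a routine estimate; everything else follows the classical template for Solovay reducibility almost verbatim.
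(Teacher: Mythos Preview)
Your argument is correct and follows essentially the same template as the paper's.  For part (1) the two proofs are identical (the paper takes $d=2$ rather than $d=1$ for reflexivity to accommodate the strict inequality in Definition~\ref{defi:S-qS}, but your use of the sequence characterization with its non-strict inequality makes $d=1$ legitimate).  For part (2) there are two small differences worth noting.  First, the paper simply asserts that clauses (1), (2) and (4) of Definition~\ref{defi:std-red} ``hold for $\leq_{qS}$ because $\leq_S$ is a standard reducibility''; this is fine for clauses (2) and (4) since $\leq_S$ implies $\leq_{qS}$, but it is not really an argument for the $\Sigma^0_3$ bound, so your explicit complexity analysis via the sequence characterization is actually more complete than what the paper offers.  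Second, for the join (clause (3)) the paper works directly with the partial-function definition, setting $f_2=f_0+f_1$ and taking the larger exponent, whereas you route through item (4) of the preceding Lemma and a common approximating sequence $\{c_n\}$; the ``take the maximum exponent and use $\gamma-c_n<1$ to align'' step is the same in both, so the two arguments are interchangeable.
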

%%%%%%%%%%%%%%%%%%%%%%%%%%%%%%%%

%%%%%%%%%%%%%%%%%%%%%%%%%%%%%%%%
\begin{proof}
(1) 
(Reflexivity)\\
For each real $\alpha$,we set $\ell$=1, d=2, $f(x)=x$ in the definition of $\leq_{qS}$.\\
(Transitivity)\\
Suppose that $\alpha \leq_{qS} \beta$ holds with witness $\ell_1$,$d_1$,$f$ and that $\beta \leq_{qS} \gamma$ holds with witness $\ell_2$,$d_2$,$g$. We define $h=f \circ g$. For each rational $q<\gamma$, $g(q)\downarrow < \beta$ and $h(q)=f \circ g(q)\downarrow<\alpha$.\\
 Now we have $(\alpha - h(q))^{\ell_1} < d_1(\beta - g(q))$ and $(\beta - g(q))^{\ell_2} < d_2(\gamma - q)$ for each rational $q$. So we have $(\alpha - h(q))^{\ell_1\ell_2}<d_{1}^{\ell_2}(\beta - g(q))^{\ell_2}<d_{1}^{\ell_2}d_2(\gamma - q)$ for each rational $q < \gamma$.

%%%%%%%%%%
(2) The conditions (1),(2) and (4) of Definition~\ref{defi:std-red} hold for $\leq_{qS}$ because $\leq_{S}$ is a standard reducibility.We prove the following proposition for (3) of Definition~\ref{defi:std-red}.

Claim: Suppose that $\alpha$ and $\beta$ are left-c.e. reals. Then we have the following.
\begin{equation}
\deg_{qS}(\alpha + \beta) = \sup\{\deg_{qS}\alpha,\deg_{qS}\beta\}
\end{equation}
Here, the supremum is taken among left-c.e. reals.

Proof of the claim. 

It suffices to show the following two propositions.

(i) $\alpha \leq_{qS} \alpha + \beta$, $\beta \leq_{qS} \alpha + \beta$.

(ii) For a left-c.e.real $\gamma$, $\alpha \leq_{qS} \gamma$, $\beta \leq_{qS} \gamma$ $\Rightarrow \alpha + \beta \leq_{qS} \gamma$.

Proof of (i)

We have $\alpha \leq_{S} \alpha + \beta$ by Definition~\ref{defi:S-qS} and $\leq_{S}$ implies $\leq_{qS}$. Hence $\alpha \leq_{qS} \alpha + \beta$. We have $\beta \leq_{qS} \alpha + \beta$ by the same argument as above.

Proof of (ii)

Suppose that $\gamma$ is a left-c.e.real and we have $\alpha \leq_{qS} \gamma$ and $\beta \leq_{qS} \gamma$. Let $\langle f_{0}, c_{0}, \ell_{0} \rangle$ and $\langle f_{1}, c_{1}, \ell_{1} \rangle$ be witnesses, respectively. If $q \in \mathbb{Q}$ and $q < \gamma$ then $f_{0}(q)\downarrow < \alpha$,$(\alpha - f_{0}(q))^{\ell_{0}} < c_{0}(\gamma - q)$ and $f_{1}(q)\downarrow < \beta$,$(\beta - f_{1}(q))^{\ell_{1}} < c_{1}(\gamma - q)$. We can assume $\ell_{0} < \ell_{1}, c_{0} \geq 1, c_{1} \geq 1$ and $\gamma -q <1$ without loss of generality. We set $f_{2}(x) = f_{0}(x) + f_{1}(x)$. Then it holds that $f_{2}(q)\downarrow < \alpha + \beta$, and we have the following.
%%%
\begin{align}
(\alpha + \beta) - f_{2}(q) &< c_{0}^{1/\ell_{0}}(\gamma - q)^{1/\ell_{0}} + c_{1}^{1/\ell_{1}}(\gamma - q)^{1/\ell_{1}} \notag \\
&\leq c_{0}^{1/\ell_{0}}(\gamma - q)^{1/\ell_{1}} + c_{1}^{1/\ell_{0}}(\gamma - q)^{1/\ell_{1}} \notag \\
&= (c_{0}^{1/\ell_{0}} + c_{1}^{1/\ell_{0}})(\gamma - q)^{1/\ell_{1}}
\end{align}
%%%
We set $c_{2} : = (c_{0}^{1/\ell_{0}} + c_{1}^{1/\ell_{0}})^{\ell_{1}}$ then $((\alpha + \beta) - f_{2}(q))^{\ell_{1}} \leq c_{2}(\gamma - q)$. 
Therefore $\alpha + \beta \leq_{qS} \gamma$ via $f_{2},c_{2},\ell_{1}$. 
\end{proof}
\begin{lemm} \label{lemm:S-qS-sep}Suppose that $\alpha$ and $\beta$ are left-c.e. reals. 
\begin{enumerate}
\item $\alpha \leq_{S} \beta $ implies $\alpha \leq_{qS} \beta $. 
\item $\alpha \leq_{qS} \beta $ does not imply $\alpha \leq_{S} \beta $. 
\end{enumerate}
\end{lemm}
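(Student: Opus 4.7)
\textbf{Part (1)} is immediate from the definitions: given a Solovay witness $(f,d)$ for $\alpha\le_{S}\beta$, the triple $(f,d,\ell:=1)$ witnesses $\alpha\le_{qS}\beta$, since $(\alpha-f(x))^{1}=\alpha-f(x)<d(\beta-x)$ is exactly the Solovay condition.

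For \textbf{part (2)}, the plan is to exhibit explicit left-c.e.\ reals $\alpha,\beta$ separating the two reductions, using the padding implicit in the set $\mathbb{R}'$ introduced in Section~2. Fix a left-c.e.\ Martin--L\"of random real $\alpha\in(0,1)$ (e.g., Chaitin's $\Omega$), and let $\alpha=0.a_1 a_2 a_3\cdots$ be its unique binary expansion. Writing $\bar{b}:=1-b$ for a bit $b$, define a padding map $\pi$ on $(0,1)$ by
\[
\pi(0.r_1 r_2 r_3\cdots)\;:=\;0.r_1\bar{r_1}\,r_2\bar{r_2}\,r_3\bar{r_3}\cdots,
\]
using the terminating binary expansion for dyadic rationals, and set $\beta:=\pi(\alpha)\in\mathbb{R}'$.

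The quantitative core is the identity
\[
\pi(s)-\pi(r)\;=\;\sum_{j\ge 1}(s_j-r_j)\,2^{-2j},
\]
which follows from $\bar{s_j}-\bar{r_j}=-(s_j-r_j)$; for $r<s$ whose expansions first differ at bit $k$ (so $s_k=1$, $r_k=0$), this sum lies in $[\tfrac{2}{3}\cdot 2^{-2k},\,\tfrac{4}{3}\cdot 2^{-2k}]$, while the analogous estimate gives $0<s-r\le 2\cdot 2^{-k}$. Three consequences follow. First, $\beta$ is left-c.e.: applying $\pi$ to a computable enumeration of dyadics in the left cut of $\alpha$ yields a computable set of rationals cofinal in the left cut of $\beta$, by strict monotonicity of $\pi$. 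Second, $\beta$ is not Martin--L\"of random, because the even bits of $\beta$ are determined by the odd bits, giving $K(\beta\upharpoonright_{2n})\le n+O(1)$, which already refutes weak Chaitin $1$-randomness. Third, $\alpha\le_{qS}\beta$ with $\ell=2$: take any computable $\{a_s\}\nearrow\alpha$ of dyadic rationals and set $b_s:=\pi(a_s)\in\mathbb{Q}$; then $\{b_s\}$ is computable and non-decreasing with $b_s\nearrow\beta$, and the estimate with $r=a_s$, $s=\alpha$ yields
\[
(\alpha-a_s)^2\;\le\;4\cdot 2^{-2k(s)}\;\le\;6(\beta-b_s),
\]
which is the sequence characterization (clause~(4) of the first Lemma of Section~3) of $\alpha\le_{qS}\beta$ with $d=6$.

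Finally, $\alpha\not\le_S\beta$ follows from the classical fact that $\le_S$-completeness coincides with Martin--L\"of randomness among left-c.e.\ reals (cf.~\cite[Chapter~9]{DH2010}): since $\alpha$ is $1$-random and hence $\le_S$-complete, if $\alpha\le_S\beta$ then $\beta$ would also be $\le_S$-complete, hence $1$-random, contradicting what we showed. The main technical obstacle is the bit-level estimate of $\pi(s)-\pi(r)$: one must verify that the alternating-complement tails cannot swallow the leading $+2^{-2k}$ term at the first disagreement, and this cancellation behavior is precisely the phenomenon that the definition of $\mathbb{R}'$ in Section~2 is engineered to exploit.
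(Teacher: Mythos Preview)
Your proof is correct and follows the same high-level strategy as the paper: both define the bit-doubling map (the paper calls it $h_1$, you call it $\pi$), set $\alpha=\Omega$ and $\beta=\pi(\Omega)$, show $\beta$ is left-c.e.\ but not $1$-random, and conclude $\alpha\not\le_S\beta$ from the Ku\v{c}era--Slaman characterization of $S$-completeness. The technical execution, however, is genuinely different and somewhat cleaner than the paper's.

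The paper establishes the key distance comparison via a case-by-case bit analysis (its Claim~1, proved by induction with four subcases), then builds a partial computable $f:\mathbb{Q}\to\mathbb{Q}$ directly witnessing Definition~\ref{defi:S-qS}(2), obtaining exponent $\ell=4$. You instead exploit the algebraic identity $\pi(s)-\pi(r)=\sum_{j\ge1}(s_j-r_j)2^{-2j}$, which immediately gives two-sided bounds $\tfrac{2}{3}2^{-2k}\le\pi(s)-\pi(r)\le\tfrac{4}{3}2^{-2k}$ at the first disagreement bit $k$; combined with $s-r\le 2\cdot2^{-k}$ this yields the sharp exponent $\ell=2$ via the sequence characterization (clause~(4) of the first lemma in Section~3), bypassing both the case analysis and the explicit construction of $f$. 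Your route is shorter and gives a better constant; the paper's route has the virtue of working directly from the definition and motivating the sets $\mathbb{D}'$, $\mathbb{R}'$.

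One small quibble: for prefix-free $K$ the bound should read $K(\beta\!\upharpoonright_{2n})\le n+K(n)+O(1)$ rather than $n+O(1)$, but this still falls well short of $2n-O(1)$, so the non-randomness conclusion is unaffected.
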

\begin{proof}
(1) follows by setting $\ell$ in the definition to be 1. 
\\ 
(2)
Claim 1: Suppose that $q \in \mathbb{D}'$, $\beta \in \mathbb{R}'$, $q < \beta$ and that $|\beta - q| \leq 2^{-(2m+1)}$ for some $m \in \mathbb{N}$. Then, the following holds.
\begin{equation}
\forall i \in \mathbb{N} [1 \leq i \leq 2m \Rightarrow b_i = q_i ]
\end{equation} 

Proof of Claim 1.

We prove this claim by induction.
Base step $m=0$: the claim is obvious.

Induction step $m = s+1$: Our induction hypothesis is as follows. 

%%%
\begin{equation}
1 \leq i \leq 2s \Rightarrow b_i = q_i
\end{equation}
%%%

Case 1: $b_{2s+1}b_{2s+2} = 01$. 

If $q_{2s+1}q_{2s+2} = 10$ then $\beta < 0.b_1 \cdots b_{2s}0111$ and $q \geq 0.b_1 \cdots b_{2s}10$. Hence $q > \beta$. We have a contradiction.

If $q_{2s+1}q_{2s+2} = 00$(This may well happen in the case of $k(q) < s$) then $\beta > 0.b_1 \cdots b_{2s}0101$ and $q = 0.b_1 \cdots b_{2s}$. Hence $\beta - q > 0.\underbrace{0 \cdots 0}_{2s}0101 > 0.\underbrace{0 \cdots 0}_{2s}001 = 2^{-(2m+1)}$. We have a contradiction. Hence $q_{2s+1}q_{2s+2} = 01$ holds. Therefore the following holds.

%%%
\begin{equation}
1 \leq i \leq 2s+2 \Rightarrow b_i = q_i
\end{equation}
%%%

Case 2: Otherwise. In other words $b_{2s+1}b_{2s+2} = 10$.

If $q_{2s+1}q_{2s+2} = 01$ then $\beta > 0.b_1 \cdots b_{2s}1001$ and $q \leq 0.b_1 \cdots b_{2s}0111$. Hence $\beta - q > 0.\underbrace{0 \cdots 0}_{2s}001 = 2^{-(2m+1)}$. We have contradiction.

If $q_{2s+1}q_{2s+2} = 00$ we can derive contradiction by the same argument as above. Hence $q_{2s+1}q_{2s+2} = 10$ holds. Therefore the following holds.

%%%
\begin{equation}
1 \leq i \leq 2s+2 \Rightarrow b_i = q_i
\end{equation}
%%%

Claim 1 is proved.

We are going to show the existence of left-c.e.reals $\alpha$ and $\beta$ such that $\alpha \not\leq_{S} \beta$ and $\alpha \leq_{qS} \beta$.
Given $\alpha$, let $0.\alpha (0) \alpha (1) \cdots$ be its binary
expansion that has infinitely many occurrences of 0. Thus $\alpha (n)$
is the $n+1$st decimal place. For each $n \in \mathbb{N}$, we define $h_1(\alpha)(2n) := \alpha(n)$ and $h_1(\alpha)(2n+1) := 1-\alpha(n)$.

Claim 2: 
\begin{enumerate}
\item $\alpha$ is left-c.e. $\Rightarrow$ $h_1(\alpha)$ is left-c.e.
\item $\alpha$ and $h_1(\alpha)$ are left-c.e. $\Rightarrow \alpha \leq_{qS} h_1(\alpha)$.
\end{enumerate}
%%%%%%%%%%%%%%%%%%%%%%%%%
Proof of (1) of Claim 2.\\
If $\alpha$ is a left-c.e.real then there exists a computable increasing sequence of rationals $\{a_n\}_{n \in \mathbb{N}}$ converging to $\alpha$. We can assume that $a_{n} \in \mathbb{D}$ for all $n \in \mathbb{N}$.Then $a_{n}=\sum_{i=1}^{k} a_{i}^{(n)}2^{-i}$($a_{i}^{(n)}=0$ or 1) for some $k$. We define a computable sequence of rationals $\{b_n\}_{n \in \mathbb{N}}$ such that $b_{n}=\sum_{i=1}^{k} (a_{i}^{(n)} + 1)4^{-i}$ for each $n \in \mathbb{N}$. The sequence is increasing and converges to $h_{1}(\alpha)$.\\
%%%%%%%%%%%%%%%%%%%%%%%%%
Proof of (2) of Claim 2.\\
Let $\alpha$ be a left-c.e.real and $\beta = h_1(\alpha)$. We are going to prove that there exists a partial computable function $f$ with the following property.\\
\begin{equation}
\forall q \in \mathbb{Q}[\beta - 2^{-5} < q < \beta \Rightarrow ( f(q)\downarrow < \alpha \land (\alpha - f(q))^{4} < \beta - q)]
\end{equation}
Definition of $f$: Given rational number $q < \beta$, we can effectively find $q'$ such that $q \leq q' < \beta$ and $q' \in \mathbb{D}'$ because $\beta = h_1(x)$ is in $\mathbb{R}'$. 
For $q' = 0.q'_{1}q'_{2} \cdots q'_{2k-1}q'_{2k}  (k=k(q))$, we define $f(q) := 0.q'_{1}q'_{3} \cdots q'_{2k-1}$.\\
Verification: Case 1. If there exists $m \in \mathbb{N}$ such that $m \geq 2$ and $2^{-(2m+2)} < \beta - q' \leq 2^{-(2m+1)}$ then by Claim 1, $\alpha$ and $f(q)$ have binary expansions whose first $m$ bits coincide. Hence $|\alpha - f(q)| \leq 2^{-m}$ and $|\alpha - f(q)|^4 \leq 2^{-4m} \leq 2^{-(2m+2)} < |\beta - q'| \leq |\beta - q|$. In other words, $(\alpha - f(q))^4 < \beta - q$. 
\\
Case 2. Otherwise. There exists $m \in \mathbb{N}$ such that $m \geq 2$ and $2^{-(2m+3)} < \beta - q' \leq 2^{-(2m+2)}$. Then by Claim 1, $\alpha$ and $f(q)$ have binary expansions whose first $m$ bits coincide. Hence $|\alpha - f(q)| \leq 2^{-m}$ and $|\alpha - f(q)|^4 \leq 2^{-4m} \leq 2^{-(2m+3)} < |\beta - q'| \leq |\beta - q|$. In other words, $(\alpha - f(q))^4 < \beta - q$. Therefore $\alpha \leq_{qS} \beta$. Claim 2 is proved.\\ 
Let $\alpha = \Omega$ and $\beta = h_1(\alpha)$. $\alpha \not\leq_{S} \beta$ because $\alpha$ is 1-random and $\beta$ is not 1-random ( Kolmogorov complexities of $\beta$ are small). Therefore we have left-c.e.reals $\alpha$ and $\beta$ such that $\alpha \not\leq_{S} \beta$ and $\alpha \leq_{qS} \beta$.\\
\end{proof}
%%%%%%%%%%

\begin{lemm} \label{lemm:qS-T-sep}
Suppose that $\alpha$ and $\beta$ are left-c.e. reals. 
\begin{enumerate}
\item $\alpha \leq_{qS} \beta $ implies $\alpha \leq_{T} \beta $. 
\item $\alpha \leq_{T} \beta $ does not imply $\alpha \leq_{qS} \beta $. 
\end{enumerate}
\end{lemm}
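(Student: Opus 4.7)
The plan for (1) is to Turing-compute $L(\alpha)$ from $L(\beta)$ by pushing $\beta$-approximations through the qS witness. Given witnesses $f,d,\ell$ for $\alpha \leq_{qS} \beta$ and a target precision $2^{-k}$, I would use $L(\beta)$ as an oracle to binary-search for a dyadic rational $x < \beta$ with $\beta - x \leq 2^{-N}$, where $N$ is chosen so that $(d \cdot 2^{-N})^{1/\ell} < 2^{-k}$; then $f(x)$ is defined, $f(x) < \alpha$, and $\alpha - f(x) < 2^{-k}$. Running this procedure for every $k$ gives a uniformly $\beta$-computable family of rational approximations of $\alpha$ from below, which suffices to decide $L(\alpha)$ by the standard trap argument (valid because, in the nontrivial case, $\alpha$ is irrational).

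For (2), I construct a counterexample via sparse encoding of a $1$-random real. Let $\alpha = \Omega$ with binary expansion $\alpha = 0.\alpha_1 \alpha_2 \cdots$, and define
\begin{equation}
\beta := \sum_{n=1}^{\infty} \alpha_n \, 2^{-n^2},
\end{equation}
so the $n$th bit of $\alpha$ sits at position $n^2$ of $\beta$, with zeros elsewhere. In the style of Claim~2 in the proof of Lemma~\ref{lemm:S-qS-sep}, $\beta$ is left-c.e.: given a computable $\{a_m\} \nearrow \alpha$ of dyadic rationals, the spread $b_m := \sum_k (a_m)_k \, 2^{-k^2}$ converges to $\beta$ and is nondecreasing, because whenever two such dyadic rationals first differ at bit $k$ the spread tail $\sum_{j > k} 2^{-j^2}$ is strictly below $2^{-k^2}$, the weight gained at the differing bit. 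For $\alpha \leq_T \beta$, from $L(\beta)$ I would recover the unique binary expansion of the irrational $\beta$ and simply read off the bits at positions $1, 4, 9, \ldots$.

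The crux is $\alpha \not\leq_{qS} \beta$. Assume for contradiction $\alpha \leq_{qS} \beta$ via $f,d,\ell$, and consider the (non-computable but perfectly well-defined) rationals $q_n := \sum_{k=1}^{n} \alpha_k \, 2^{-k^2}$, which satisfy $q_n < \beta$ and $\beta - q_n < 2^{-(n+1)^2 + 1}$. Since Definition~\ref{defi:S-qS} demands the qS inequality at \emph{every} rational below $\beta$, we may apply it at $q_n$ and obtain $\alpha - f(q_n) < 2^{-m_n}$ with $m_n = (n+1)^2/\ell - O(1)$. Irrationality of $\alpha$ lets us extract $\alpha \upharpoonright m_n$ from $f(q_n)$ up to $O(1)$ bits, and $q_n$ is in turn computable from $\alpha \upharpoonright n$, giving
\begin{equation}
K(\alpha \upharpoonright m_n) \leq^{+} K(f(q_n)) \leq^{+} K(q_n) \leq^{+} K(\alpha \upharpoonright n) + K(n) \leq n + O(\log n),
\end{equation}
while $1$-randomness of $\Omega$ forces $K(\alpha \upharpoonright m_n) \geq m_n - O(1) \sim n^2/\ell$, a contradiction for large $n$.

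The main obstacle is precisely this last Kolmogorov argument. It relies on feeding the qS function the information-poor witnesses $q_n$ (complexity $\sim n$) even though they are not computable; this is legitimate because qS is required to hold at \emph{all} rationals below $\beta$, and it is what lets the quadratic precision $2^{-m_n}$ collide with the $1$-randomness of $\alpha$. The monotonicity of the spreading operation and the extraction of $\alpha \upharpoonright m_n$ from $f(q_n)$ are both secured by the same geometric tail bound $\sum_{j > k} 2^{-j^2} < 2^{-k^2}$.
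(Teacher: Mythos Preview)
Your proof of (1) is essentially the paper's: both push a $\beta$-computable sequence of rationals $x$ with $\beta-x\le 2^{-\ell n}$ through the qS witness $f$ to get $\alpha - f(x)\le d^{1/\ell}2^{-n}$, hence $\alpha$ is $\beta$-computable.

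For (2) your argument is correct but genuinely different from the paper's. The paper encodes $\Omega$ into $\beta$ by \emph{repetition}---bit $\alpha_n$ is repeated $n$ times, so the first $m$ bits of $\Omega$ occupy the first $m(m+1)/2$ bits of $\beta$---whereas you place $\alpha_n$ at the single position $n^2$; both give quadratic dilation, and your left-c.e.\ verification via the geometric tail bound $\sum_{j>k}2^{-j^2}<2^{-k^2}$ is clean. The real divergence is in the contradiction: the paper takes, for \emph{every} binary string $\sigma$ of length $m$, the interval $I_\sigma$ of radius $2^{-(m^2-2k)/(2\ell)}$ around $f(q(\sigma))$ and observes that $\{U_m\}_m$ with $U_m=\bigcup_\sigma I_\sigma$ is a Martin-L\"of test capturing $\Omega$, since $\lambda(U_m)\le 2^m\cdot 2\cdot 2^{-(m^2-2k)/(2\ell)}$. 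You instead work only with the single rational $q_n$ built from $\alpha\upharpoonright n$ and derive a Kolmogorov bound $K(\alpha\upharpoonright m_n)\le n+O(\log n)$ with $m_n\sim n^2/\ell$, contradicting $1$-randomness via Levin--Schnorr. Your route is a little more direct and makes the compression explicit; the paper's covering argument is slightly more self-contained in that it produces an actual test rather than invoking the complexity characterisation. One small bookkeeping point: in your displayed chain the $K(n)$ term really belongs to the \emph{first} inequality (you need $n$ to know $m_n$ when extracting $\alpha\upharpoonright m_n$ from $f(q_n)$), not the third (since $n=|\alpha\upharpoonright n|$ is free there); the final $n+O(\log n)$ bound is unaffected.
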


\begin{proof}
(1) 
Suppose that $\langle f, d, \ell \rangle$ is a witness of the qS reduction. 
Take a $\beta$-computable sequence $\{ \gamma_{n} \} \nearrow \beta$ of the following property. 
\begin{equation}
\forall n ~ \beta - \gamma_{n} \leq 2^{-\ell n}
\end{equation}

Then we have the following. 

\begin{equation}
\alpha - f (\gamma_{n}) \leq d^{1/\ell} ( \beta - \gamma_{n} )^{1/\ell} 
\leq d^{1/\ell} 2^{-n}
\end{equation}

Hence, $\alpha $ is $\beta$-computable. 

(2) 
We are going to show the existence of reals $\alpha$ and $\beta$ such that $\alpha \leq_{T} \beta$ and  $\alpha \not\leq_{qS} \beta$. 
Let  $\Omega = 0.\alpha_1\alpha_2\alpha_3\cdots \alpha_n\cdots$ be the binary expansion of Chaitin's halting probability $\Omega$. 
Let $\beta$ be the real number whose binary expansion is given as follows. 
%%%
\begin{equation}
\beta = 0.\alpha_1\alpha_2\alpha_2\alpha_3\alpha_3\alpha_3\cdots \underbrace{\alpha_n \cdots \alpha_n}_{n} \cdots
\end{equation}
%%%
Then $\Omega \equiv_{T} \beta$. 
Assume for a contradiction that $\Omega \leq_{qS} \beta$. Then there exist $\ell,k \in \mathbb{N}$ and a partial computable function $f:\mathbb{Q} \rightarrow \mathbb{Q}$ such that for all rationals $q < \beta $, $f(q)$ is defined and $(\Omega - f(q))^{\ell} < 2^k(\beta - q)$. 
For each bit string $\sigma = x_{1} \cdots x_{m}$, we define a rational $q(\sigma)$ as to be $0.x_1 x_2 x_2 x_3 x_3 x_3 \cdots \underbrace{x_m \cdots x_m}_{m}$. 
In particular, in the case where $\sigma$ is the first $m$ bits $\alpha_{1} \cdots \alpha_{m}$ of $\Omega$, $q=q(\sigma)$ is the first $m(m+1)/2$ bits of $\beta$. 
In this case we have $(\Omega - f(q))^{\ell} < 2^{k}(\beta - q) < 2^{k - m(m+1)/2}$, 
thus $\Omega - f(q) < 2^{- (m^2 - 2k)/(2\ell)}$. 
Therefore $\Omega $ belongs to the following interval: 
$I_\sigma = ( f(q) - 2^{- (m^2 - 2k)/(2\ell)}, f(q) + 2^{- (m^2 - 2k)/(2\ell)} )$. 

By means of these intervals, we construct a Martin-L{\"o}f test. 
We define $U_m$ as to be the union of all $I_{\sigma}$ among all binary strings $\sigma$ of length $m$ such that $f(q(\sigma))$ is defined. 
Then $\{ U_{m} \}_{m}$ is uniformly $\Sigma_{1}$. In addition, 
$\lambda(U_m) \leq 2^m \cdot 2 \cdot2^{- (m^2 - 2k)/(2\ell)}$. 
Thus for all but finitely many $m$, it holds that $\lambda(U_m) \leq 2^{-m}$. 
By the previous paragraph, $\Omega$ belongs to $U_m$ for all $m$. 
This contradicts to the fact that $\Omega$ is Martin-L{\"o}f random.
\end{proof}
%%%%%%%%%%%%%%%%%%%%%%%%%%%%%%%%

%%%%%%%%%%%%%%%%%%%%%%%%%%%%%%%%
\begin{lemm} Suppose that $\alpha$ and $\beta$ are left-c.e.reals and $\alpha \in \mathbb{R}^{'}$. If $\alpha \leq_{qS} \beta$ with witness $<f,c,\ell>$, where $\beta \upharpoonright_{( \ell \times n )} $ $=\sigma_{1}\frown \cdots \frown \sigma_{\ell}$, 
$| \sigma_{i}|=n$ then the following holds. 
\begin{equation}
K (\alpha \upharpoonright_n) / \ell \leq 
\max_{i} \{ K (\sigma_{i}) \} + O(1)
\end{equation}
\end{lemm}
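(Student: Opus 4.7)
The plan is to exhibit a prefix-free decoder of constant size which, given shortest prefix codes $\pi_1,\ldots,\pi_\ell$ for $\sigma_1,\ldots,\sigma_\ell$ concatenated with $O(1)$ auxiliary bits, outputs $\alpha\upharpoonright_n$. Because the concatenation of prefix-free codewords is itself prefix-free, the total description length will be $\sum_{i=1}^\ell K(\sigma_i)+O(1)\le \ell\cdot\max_i K(\sigma_i)+O(1)$, and dividing by $\ell$ yields the stated inequality. The decoder first reassembles $\beta\upharpoonright_{\ell n}=\sigma_1\frown\cdots\frown\sigma_\ell$ and forms the dyadic rational $q$ whose length-$\ell n$ binary expansion is this string, shifting $q$ down by $2^{-\ell n}$ if necessary to guarantee $q<\beta$. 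It then invokes the qS-witness to compute $f(q)$: since $\beta-q\le 2\cdot 2^{-\ell n}$, the defining inequality $(\alpha-f(q))^\ell<c(\beta-q)$ gives $f(q)<\alpha$ and $\alpha-f(q)<C\cdot 2^{-n}$ with $C:=(2c)^{1/\ell}$ a constant depending only on the witness. Hence $\alpha$ is pinned to a window of length $C\cdot 2^{-n}$ around $f(q)$.

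Next I exploit $\alpha\in\mathbb{R}'$. Assume $n=2m$ is even; the odd case costs only one extra bit and is analogous. Because $\alpha\in\mathbb{R}'$, the truncation $p^*:=0.\alpha_1\cdots\alpha_{2m}$ lies in $\mathbb{D}'$ and satisfies $p^*<\alpha<p^*+2^{-n}$. Combined with the preceding window estimate, $p^*$ lies in an interval of length at most $(C+1)\cdot 2^{-n}$ around $f(q)$. A direct count shows that any two distinct length-$n$ elements of $\mathbb{D}'$ differ by at least $2^{-n}$ (the minimum is realised by flipping only the last coordinate of the parameter vector $(x_1,\ldots,x_m)$). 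Therefore at most $K:=C+2$ candidates from $\mathbb{D}'$ of length $n$ fit in the window, and they can be enumerated computably from $f(q)$ in increasing order. Appending to the concatenation $\pi_1\cdots\pi_\ell$ a fixed-length tag of $\lceil\log_2 K\rceil=O(1)$ bits identifying the correct index $j$ completes the description; the decoder outputs the length-$n$ binary expansion of the selected candidate, which equals $\alpha\upharpoonright_n$.

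The main obstacle is the resolution gap in the approximation step above: $f(q)$ recovers $\alpha$ only to precision $C\cdot 2^{-n}$ rather than $2^{-(n+1)}$, so Claim~1 inside the proof of Lemma~\ref{lemm:S-qS-sep} cannot be applied directly to $f(q)$ to pin down $2m$ bits of $\alpha$. The hypothesis $\alpha\in\mathbb{R}'$ is exactly what circumvents this: it forces the would-be truncation $p^*$ into the sparse set $\mathbb{D}'$, whose $2^{-n}$ minimum spacing ensures that only constantly many length-$n$ candidates fit in any window of length $O(2^{-n})$. Once this is established, a constant number of additional bits suffices to select the correct candidate, producing $K(\alpha\upharpoonright_n)\le \sum_i K(\sigma_i)+O(1)$ and hence the claimed bound after division by $\ell$.
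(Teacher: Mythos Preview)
Your proof is correct and follows essentially the same route as the paper: show that $\alpha\upharpoonright_n$ is computable from $\beta\upharpoonright_{\ell n}$ together with $O(1)$ advice bits, whence $K(\alpha\upharpoonright_n)\le K(\beta\upharpoonright_{\ell n})+O(1)\le \sum_i K(\sigma_i)+O(1)\le \ell\max_i K(\sigma_i)+O(1)$. The paper's own proof is extremely terse---it simply asserts the first step (``we can compute $\alpha\upharpoonright_n$ from $\beta\upharpoonright_{(\ell\times n)}$ and little more constant bits'') without justification---whereas you correctly identify and resolve the genuine subtlety: the qS-witness only pins $\alpha$ to a window of width $C\cdot 2^{-n}$ around $f(q)$, which is not fine enough to read off $n$ bits of an arbitrary real, and it is precisely the $2^{-n}$ minimum spacing of length-$n$ elements of $\mathbb{D}'$ (forced by $\alpha\in\mathbb{R}'$) that reduces the ambiguity to a bounded list selectable by $O(1)$ extra bits. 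One cosmetic point: rather than ``shifting $q$ down by $2^{-\ell n}$ if necessary'' (the decoder cannot test $q<\beta$), simply always set $q=0.\beta_1\cdots\beta_{\ell n}-2^{-\ell n}$, or note that if $\beta$ is dyadic then $\alpha$ is computable and the bound is trivial.
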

%%%%%%%%%%%%%%%%%%%%%%%%%%%%%%%%

%%%%%%%%%%%%%%%%%%%%%%%%%%%%%%%%
\begin{proof}
Suppose that $\alpha \in \mathbb{R}^{'}$ and $\alpha \leq_{qS} \beta$ with witness $<f,c,\ell>$. We can compute $\alpha \upharpoonright_n$ from $\beta \upharpoonright_{( \ell \times n )} $ and little more constant bits. Therefore, 
\begin{align}
K (\alpha \upharpoonright_n) &\leq K (\beta \upharpoonright_{( \ell \times n )}) + O(1) \\ \notag
&= K (\sigma_{1}\frown \cdots \frown \sigma_{\ell}) + O(1) \\ \notag
&\leq K (\sigma_{1}) + \cdots + K (\sigma_{\ell}) + O(1) \\ \notag
&\leq \ell \max_{i} \{ K (\sigma_{i}) \} + O(1)
\end{align}
The lemma has been proved.
\end{proof}
%%%%%%%%%%%%%%%%%%%%%%%%%%%%%%%%

%%%%%%%%%%%%%%%%%%%%%%%%%%%%%%%%
In the study of partial randomness, Tadaki \cite{T2002} introduced the concept of 
\emph{generalized halting probability} $\Omega^{T}$ for each positive real number 
$T \leq 1$. 

\begin{equation}
\Omega^{T} := \sum_{p \in \mathrm{dom} U} 2^{-|p|/T}
\end{equation}

Here, $U$ is a fixed universal prefix-free machine. In the case of $T=1$, $\Omega^{1}$ coincides with the usual Chaitin's halting probability $\Omega$. 
In particular, Tadaki showed that $\Omega^{T}$ is weakly Chaitin $T$-random and $T$-compressible. 

In the case of $T=2^{-n}$ and $n$ is a natural number, we introduce a \emph{modified generalized halting probability} $\Omega_{T}$. Let $h_1$ be the function defined in the proof of Lemma~\ref{lemm:S-qS-sep} (2).  

\begin{equation}
\Omega_{2^{0}} := \Omega, \quad \Omega_{2^{-(n+1)}} := h_{1} (\Omega_{2^{-n}})
\end{equation}

%%%%%%%%%%%%%%%%%%%%%%%%%%%%%%%%
\begin{lemm} 
Let $n \in \mathbb{N}$ and $T=2^{-n}$.
\begin{enumerate}
\item $\Omega_{T}$ is a left-c.e. real number. 
\item $\Omega_{T}$ is qS-complete among left-c.e.reals.
\item $\Omega_{T}$ is weakly Chaitin $T$-random.
\item $\Omega_{T}$ is $T$-compressible.
\end{enumerate}
\end{lemm}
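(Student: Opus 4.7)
The plan is to exploit the structural identity $\Omega_T = h_1^n(\Omega)$. Since a single application of $h_1$ sends bit $\alpha(i)$ to positions $2i$ (original) and $2i+1$ (complement), after $n$ iterations the $j$th bit of the underlying $\Omega$ sits at position $2^n j$ of $\Omega_T$. Consequently $\Omega_T \upharpoonright_{2^n k}$ and $\Omega \upharpoonright_k$ are effectively inter-computable by a procedure that depends only on the fixed constant $n$. All four parts of the lemma reduce to this observation together with already-established facts about $\Omega$ and $\leq_{qS}$.

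For part (1) I would induct on $n$: $\Omega = \Omega_{2^0}$ is left-c.e., and Claim 2(1) in the proof of Lemma~\ref{lemm:S-qS-sep}(2) shows that $h_1$ preserves left-c.e.-ness, so every $\Omega_{2^{-n}}$ is left-c.e. For part (2), Claim 2(2) in that same proof yields $\alpha \leq_{qS} h_1(\alpha)$ whenever both sides are left-c.e.; iterating this and invoking transitivity of $\leq_{qS}$ produces $\Omega \leq_{qS} \Omega_T$. Since $\Omega$ is Solovay-complete among left-c.e.\ reals and $\leq_S$ implies $\leq_{qS}$ by Lemma~\ref{lemm:S-qS-sep}(1), every left-c.e.\ real is qS-reducible to $\Omega$, and hence to $\Omega_T$ by transitivity.

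For part (3), reading the bits of $\Omega_T \upharpoonright_m$ at positions $0, 2^n, 2\cdot 2^n, \dots$ recovers $\Omega \upharpoonright_{\lfloor m/2^n \rfloor}$ by a uniform procedure (since $n$ is a fixed constant), so $K(\Omega \upharpoonright_{\lfloor m/2^n \rfloor}) \leq K(\Omega_T \upharpoonright_m) + O(1)$; combined with the weak Chaitin $1$-randomness of $\Omega$, namely $k \leq^+ K(\Omega \upharpoonright_k)$, this gives $Tm \leq^+ K(\Omega_T \upharpoonright_m)$. For part (4), set $k = \lceil m/2^n \rceil$; from $\Omega \upharpoonright_k$ and the value of $m$ we can compute $h_1^n(\Omega \upharpoonright_k) = \Omega_T \upharpoonright_{2^n k}$ and truncate to length $m$, so
\[
K(\Omega_T \upharpoonright_m) \leq K(\Omega \upharpoonright_k) + K(m) + O(1) \leq k + O(\log k) + O(\log m) \leq Tm + O(\log m),
\]
using the trivial prefix-free upper bound $K(\sigma) \leq |\sigma| + O(\log |\sigma|)$. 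This is the desired $Tm + o(m)$.

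The only delicate point is keeping the prefix-free overhead of describing $m$ in part (4) within $o(m)$, which is handled by the standard self-delimiting encoding so that the extra term is $O(\log m)$; beyond that I anticipate no real obstacle, as the proof is essentially a bookkeeping exercise built on top of the combinatorial behaviour of $h_1$ and the known properties of $\Omega$.
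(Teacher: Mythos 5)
Your proof is correct and takes essentially the same route as the paper: parts (1) and (2) rest on Claim 2 of the proof of Lemma~\ref{lemm:S-qS-sep}(2) together with the S-completeness of $\Omega$, the implication $\leq_S\ \Rightarrow\ \leq_{qS}$ and transitivity, while parts (3) and (4) transfer prefix-free complexity between $\Omega$ and its $h_1$-images exactly as the paper does. The only cosmetic difference is that you unroll the $n$-fold iteration at once via the $2^n$-block structure (relating $\Omega_T\upharpoonright_m$ directly to $\Omega\upharpoonright_{\lfloor m/2^n\rfloor}$), whereas the paper performs the $n=1$ complexity computation and repeats it by induction on $n$.
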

%%%%%%%%%%%%%%%%%%%%%%%%%%%%%%%%

%%%%%%%%%%%%%%%%%%%%%%%%%%%%%%%%
\begin{proof}
The case of $n=0$ is well known fact. 
We are going to prove the case of $n=1$. The assertion (1) is immediate from Claim 2 (1) in the proof of Lemma~\ref{lemm:S-qS-sep} (2) and the fact that $\Omega$ is left-c.e. 
The assertion (2) holds because $\Omega$ is S-complete and $\Omega \leq_{qS} h_{1} (\Omega)$ by Claim 2 (2). For all $k \in \mathbb{N}$, we have $K(\Omega\upharpoonright_k) \leq_{+} K(h_1(\Omega)\upharpoonright_{2k}) \leq_{+} K(\Omega\upharpoonright_k)$. Hence $k \leq_{+} K(h_1(\Omega)\upharpoonright_{2k}) \leq_{+} k + 2\log{k}$ (see \cite[Section2.2]{N2009}). If $k$ is even then $k/2 \leq_{+} K(h_1(\Omega)\upharpoonright_{k}) \leq_{+} k/2 + 2\log{k/2} = k/2 +o(k)$. If $k$ is odd then the complexity differs from the even-case at most up to  a constant. Therefore the assertions (3) and (4) hold for $n = 1$. The induction step is shown in the same way as the above-mentioned case of $n=1$. Thus, the assertions hold for all $n \in \mathbb{N}$.
\end{proof}
%%%%%%%%%%%%%%%%%%%%%%%%%%%%%%%%
\section{The reductions and notions of continuity}
%%%%%%%%%%%%%%%%%%%%%%%%%%%%%%%%%%%%%%%%%%%%%%%%%%%%%%%%%%%%%%%%%

%%%%%%%%%%%%%%%%%%%%%%%%%%%%%%%%
\begin{theo} \label{theo:lip1}
Suppose that $\alpha$ and $\beta$ are left-c.e. reals. 
Then, the following three assertions are equivalent: 
$(L)_{1}$, $(L)_{2}$, and ``$\alpha \leq_{S} \beta$''. 
\end{theo}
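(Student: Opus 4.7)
I would close the cycle $(L)_1 \Rightarrow (L)_2 \Rightarrow \alpha \leq_S \beta \Rightarrow (L)_1$. For $(L)_1 \Rightarrow (L)_2$, pick any sequence $y_n \nearrow \alpha$; by cofinality (c), for each $n$ there exists $x_n < \beta$ with $f(x_n) \geq y_n$, and then choose rationals $r_n$ satisfying $r_{n-1} < r_n < \beta$, $r_n \geq x_n$, and $r_n > \beta - 1/n$ (possible because $\mathbb{Q}$ is dense in $\mathbb{R}$). Monotonicity (d) gives $f(r_n) \geq f(x_n) \geq y_n \to \alpha$, which is (e).

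For $(L)_2 \Rightarrow \alpha \leq_S \beta$, the key observation is that Lipschitz continuity together with the cofinal sequence from (e) forces $\alpha - f(q) \leq L(\beta - q)$ for every rational $q < \beta$: for $n$ sufficiently large $r_n > q$, hence $f(r_n) \leq f(q) + L(r_n - q)$, and letting $n \to \infty$ yields the inequality. Now define a candidate Solovay witness $g \colon \mathbb{Q} \to \mathbb{Q}$ by: semi-decide $q < \beta$ via the enumeration of $L(\beta)$; extract from the same enumeration a rational $s$ with $q < s < \beta$; set $\varepsilon := (s - q)/2$; apply Weihrauch computability of $f$ to obtain a rational $\tilde f$ with $|\tilde f - f(q)| < \varepsilon$; output $g(q) := \tilde f - \varepsilon$. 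Then $g(q) < f(q) < \alpha$ and $\alpha - g(q) < \alpha - f(q) + 2\varepsilon \leq (L+1)(\beta - q)$, so $d := \lceil L \rceil + 1$ witnesses $\alpha \leq_S \beta$.

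For the remaining direction $\alpha \leq_S \beta \Rightarrow (L)_1$, extract from the Solovay witness computable sequences $\{a_n\} \nearrow \alpha$ and $\{b_n\} \nearrow \beta$ with $\alpha - a_n \leq d(\beta - b_n)$ (via the $\ell = 1$ specialization of the sequence characterization in Lemma 1), and set
\[ g_n(x) := a_n - d(b_n - x)_+, \qquad f(x) := \sup_n g_n(x). \]
Each $g_n$ is nondecreasing and Lipschitz with constant $d$, so $f$ inherits (b) and (d). Cofinality (c) follows because $g_n(x) = a_n$ whenever $x \geq b_n$, forcing $\sup_{x < \beta} f(x) = \alpha$. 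To see that $f(x) < \alpha$ for $x < \beta$, let $N$ be the least index with $b_N > x$; the tail satisfies $\sup_{n \geq N} g_n(x) \leq \alpha - d(b_N - x) < \alpha$, while $\sup_{n < N} g_n(x) = a_{N-1} < \alpha$.

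The principal obstacle is property (a), Weihrauch computability of $f$. The running maxima $\tilde g_m(x) := \max_{n \leq m} g_n(x)$ form a computable nondecreasing sequence converging up to $f(x)$, with error bound $f(x) - \tilde g_m(x) \leq \alpha - a_m$; this bound is not itself effectively accessible since $\alpha$ is only left-c.e. I would address this by combining the Solovay inequality $\alpha - a_m \leq d(\beta - b_m)$ with an effective positive lower bound on $\beta - x$ extracted from the input name of $x$ (any rational $q > x$ enumerated into $L(\beta)$), thereby certifying smallness of the tail for $m$ sufficiently large. An alternative, possibly cleaner route is to replace the sup construction with direct piecewise-linear interpolation on a subsequence $(b_{n_k}, a_{n_k})$ with $\beta - b_{n_{k+1}} \leq (\beta - b_{n_k})/2$; the resulting line graph then has slopes uniformly bounded by $2d$, and although the subsequence itself need not be computable, its values on rationals can still be computed effectively.
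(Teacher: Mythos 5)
Your first two implications are fine. The step $(L)_1 \Rightarrow (L)_2$ is the routine extraction of a sequence $\{r_n\}$ from (c) and (d), and your proof of $(L)_2 \Rightarrow \alpha \leq_S \beta$ is essentially the paper's argument with a different rounding device: the paper approximates $f(q)$ from above by rationals and waits for one to appear in the c.e.\ left cut of $\alpha$ (so $g(q)\geq f(q)$ and the constant stays $L$), while you approximate from below by subtracting an effectively found margin $(s-q)/2$ and pay an extra $+1$ in the constant; both are correct, and both use the same limiting argument along $\{r_n\}$ to get $\alpha - f(q) \leq L(\beta-q)$.

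The gap is in $\alpha \leq_S \beta \Rightarrow (L)_1$, exactly at the point you flag: Weihrauch computability of $f=\sup_n g_n$ is not established, and neither of your proposed repairs closes it. The obstruction is that a late cone $g_n$ can raise the value of the supremum at a point $x$ lying far to the left of $\beta$: the only bound on the tail is $f(x)-\tilde g_m(x) \leq \alpha - a_m \leq d(\beta - b_m)$, and since $\beta$ is merely left-c.e.\ there is no computable modulus for $\beta - b_m$. Your first fix extracts a positive lower bound on $\beta - x$ from the name of $x$, but that quantity is irrelevant: the error is controlled by how close the \emph{approximation} $b_m$ is to $\beta$, not by how far $x$ is from $\beta$, so knowing $\beta - x \geq \delta$ gives no stage $m$ at which the tail is certified small. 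Indeed, one can choose sequences $\{a_n\},\{b_n\}$ satisfying $\alpha - a_n \leq d(\beta-b_n)$ (for instance with $\alpha=\beta$, $a_n \geq b_n$) for which withheld late jumps of $a_n - d b_n$ change $\sup_n g_n$ at fixed rational arguments by amounts no algorithm can anticipate, so the sup construction genuinely can fail to be computable; the theorem only asserts that \emph{some} admissible $f$ exists, not that this one works. Your second fix, a subsequence with $\beta - b_{n_{k+1}} \leq (\beta - b_{n_k})/2$, does give slopes bounded by $2d$, but selecting it requires knowing $\beta$, so the knots are not computable, and the claim that the interpolant's values at rationals "can still be computed effectively" is unsupported — evaluating a piecewise-linear function requires locating the argument among its knots. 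The paper's construction resolves precisely this: it chooses the knots \emph{effectively and adaptively}, accepting $(b_N,f(b_N))$ as the next knot as soon as all previously accepted knots lie strictly inside the region $D_{b_N, f(b_N)}=\{(x,y): f(b_N)-d(b_N-x)\leq y\leq f(b_N)\}$, a condition decidable from the rational data $b_i, f(b_i)$ (and eventually satisfied because each accepted knot is interior to $D_{\beta,\alpha}$ and $f(b_N)\to\alpha$, $b_N\to\beta$). With this selection the value of the line graph at any $u<\beta$ is finalized once some knot passes $u$ and is never revised afterwards, which yields Weihrauch computability, while interiority gives the Lipschitz constant $d$ and cofinality. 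You need this (or an equivalent finalization mechanism) to complete the third implication.
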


\begin{proof}
$(L)_{1} \Rightarrow (L)_{2}$ is obvious. 

%%%%%%%%%%%

$(L)_{2} \Rightarrow \alpha \leq_{S} \beta$: 
Assume that $(L)_{2}$ holds with witnesses $f, L$ and $\{ r_{n} \}$. 
Given a $q \in (-\infty, \beta) \cap \mathbb{Q}$, we are going to choose a rational number $g(q)$. The value $g(q)$ will be an approximate value of $f(q)$. 
The function $g$ and $L$ will be the witnesses of $\alpha \leq_{S} \beta$.

Let $f(q)=0.s_{1}s_{2}\cdots$ be a binary expansion where 0 has infinitely many occurrences.  
Since $f$ is computable in the sense of Weihrauch and $q$ is a rational, 
$f(q)$ is a computable real. In particular, the mapping of 
$n \mapsto s_{n}$ is a total computable function. 
For each $n \geq 1$, let $k=k(n)$ be the least $k \geq n$ such that $s_{n}=0$. 
Then let $c_{n}=0.s_{1}\cdots s_{k-1}1$. 
Each $c_{n}$ is a rational, and $c_{n} \to f(q)+0$. 

Since $\alpha$ is a left-c.e. real, its left set $W_{a}=\{ r\in \mathbb{Q} : r < \alpha \}$ is c.e. Therefore, by means of parallel search with respect to $n$, we can effectively find a natural number $n$ such that $c_{n} \in W_{a}$. Let $m$ be such a number that we first find. Let $g(q)=c_{m}$. This completes the definition of $g$. 

Now we are going to verify that $g$ and $L$ are the witnesses of the Solovay reduction. 
For $n$ that is large enough, it holds that $g(q) \leq f(r_{n})$. Thus we have $f(q) \leq c_{m} = g(q) \leq f(r_{n})$. Therefore, the following hold for all but finitely many natural numbers $n$. 
\begin{equation}
|f(r_{n}) -g(q)| \leq |f(r_{n})-f(q)| \leq L |r_{n}-q|
\end{equation}

The last inequality holds by the Lipschitz continuity of $f$. 
By taking the limit of $n \to \infty$, we have $|\alpha - g(q)| \leq L|\beta - q|$. 
Hence, it holds that $\alpha \leq_{S} \beta$. 

%%%%%%%%%%%

$\alpha \leq_{S} \beta \Rightarrow (L)_{1}$: 
Suppose $\alpha \leq_{S} \beta $. There exist a partial computable function $f$ and a positive integer $d$ with the following properties. 
For each rational $q < \beta$, we have $f(q) \downarrow < \alpha$ and $\alpha - f(q) < d (\beta - q)$. 

For each point $(w,z)$ such that $w \leq \beta$ and $z \leq \alpha$, 
we define closed region $D_{w,z}$ as follows. 

\begin{equation}
D_{w,z} =
\{ (x,y) \in \mathbb{R}^{2} : -d(w-x) +z \leq y \leq z \}
\end{equation}

Let $\{ b_{n} \}_{n}$ be a computable sequence of rationals that increasingly converges to $\beta$. 
Without loss of generality, we may assume that for each $n$ it holds that 
$f(b_{n}) < f(b_{n+1})$. 

Let $Q_{0}$ be the point $(b_{0}, f(b_{0}))$. 
By our assumption on $f$ and $d$, 
it holds that $f(b_{0}) < \alpha$, and $\alpha - f(b_{0}) < d (\beta - b_{0})$. 
By means of the last inequality, 
we have $- d (\beta - b_{0}) + \alpha < f(b_{0})$. 
Hence $Q_{0}$ is an interior point of $D_{\beta,\alpha}$. 
Therefore, for any $x \leq b_{0}$ the point $(x, f(b_{0}))$ is an interior point of $D_{\beta, \alpha}$. 

For each $x \leq b_{0}$ we define $g(x)$ as to be $f(b_{0})$. 
Now suppose that $n$ is a natural number and we have defined interior points $Q_{i}$ ($i=0, \dots, n$) of $D_{\beta,\alpha}$. By connecting $Q_{i}$s we get a line graph, and we define $g(x)$ for $x \leq b_{n}$ by this line graph. 
In Figure~\ref{fig:graph_solo2lip}, the line leading to $(\beta, \alpha)$ denotes the line $y = \alpha, x \leq \beta$. The diagonal line starting from $(\beta, \alpha)$ denotes the line $y=-d(\beta -x) + \alpha $. The region between the two lines is $D_{\beta, \alpha}$. The thick half line of Figure~\ref{fig:graph_solo2lip} is the set $\{ (x, f(b_{0})) : x \leq b_{0 }\}$.  

%%%%%
\begin{figure}[htb] 
\centering
\includegraphics[width=.5\textwidth]{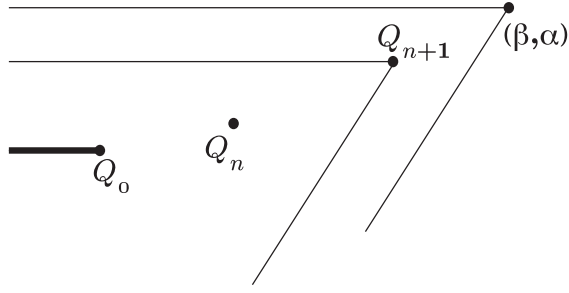}
\caption{Construction of the graph of $g$\label{fig:graph_solo2lip}}
\end{figure}
%%%%%

For sufficiently large $N$, all the $Q_{i}$ ($i=0, \dots, n$) are 
interior points of the region $D_{b_{N}, f(b_{N})}$. 
We can effectively find one of such $N$. Let $Q_{n+1}$ be 
the point $(b_{N}, f(b_{N}))$. $Q_{n+1}$ is an interior point of $D_{\beta,\alpha}$ in the same manner as $Q_{0}$. 
This completes the inductive definition of $g : (-\infty,\beta) \to (-\infty, \alpha)$. 

Then $g$ is computable in the sense of Weihrauch, and if $x \to \beta - 0$ then $g(x) \to \alpha - 0$. In addition, $g$ is increasing. We are going to show that $g$ is Lipchitz continuous with Lipschitz constant $d$. 
For each $n$, we denote the $x$-coordinate of $Q_{n}$ by $b^{\prime}_{n}$.  
Suppose $x_{1}, x_{2}$ are reals such that $x_{1} < x_{2} < \beta$. 
Let $n$ be the least $n$ such that $x_{2} \leq b^{\prime}_{n}$. In the case of $n=0$ it holds that $|g(x_{2}) - g(x_{1})|=0 \leq d |x_{2} - x_{1}| $. 
Otherwise, we have $n \geq 1$ and the following holds. 

\begin{equation} \label{eq:solo2l1upperbound1}
\left| \frac{ g(x_{2}) - g(x_{1}) }{x_{2} - x_{1}} \right|
\leq 
\max \{ \left| \frac{ g(b^{\prime}_{j+1} ) - g( b^{\prime}_{j} ) }{b^{\prime}_{j+1} - b^{\prime}_{j} } \right| : 
j+1 \leq n \}
\end{equation}

Recall that for each $j$, $Q_{j} (b^{\prime}_{j}, f( b^{\prime}_{j} ))$ is an interior point of $D_{b^{\prime}_{j+1}, f( b^{\prime}_{j+1} )}$ 
and the suffix $ (b^{\prime}_{j+1}, f( b^{\prime}_{j+1} ))$ of $D$ equals the coordinate of $Q_{j+1}$. 
Therefore, $b^{\prime}_{j} < b^{\prime}_{j+1} $, and 
$-d (b^{\prime}_{j+1} - b^{\prime}_{j} ) + g( b^{\prime}_{j+1}  ) 
< g( b^{\prime}_{j}  ) < g( b^{\prime}_{j+1}  )$. Therefore, for each $j$, the following holds. 

\begin{equation} \label{eq:solo2l1upperbound2}
\left| \frac{ g (b^{\prime}_{j+1} ) - g ( b^{\prime}_{j} ) }{ b^{\prime}_{j+1} - b^{\prime}_{j} } \right| 
< d
\end{equation}

Hence, the left-hand side of \eqref{eq:solo2l1upperbound1} is less than the right-hand side of \eqref{eq:solo2l1upperbound2}. 
Thus, $| g ( x_{2} ) - g ( x_{1} ) | \leq d | x_{2} - x_{1} |$. 
Therefore, $(L)_{1}$ holds. 
\end{proof}
%%%%%%%%%%%%%%%%%%%%%%%%%%%%%%%%

\begin{coro} \label{coro:lip1}
Suppose that $\alpha$ and $\beta$ are left-c.e. reals. 
Then $\alpha \leq_{S} \beta$ if and only if there exists a rational $s < \alpha$ 
and a function $f: [s, \beta] \to \mathbb{R}$ of the following properties. 

{\rm (a)} $f$ is computable in the sense of Weihrauch. 

{\rm (b)} $f$ is Lipschitz continuous in $[s, \beta]$. 

{\rm (e)} There exists a strict increasing sequence of rationals $\{ r_{n} \}$ such that ($s \leq r_{n}$, and) $r_{n} \to \beta - 0$ and $f(r_{n}) \to \alpha - 0$. 
Here, $\{ r_{n} \}$ may be non-computable.
\end{coro}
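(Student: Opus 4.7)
The plan is to deduce both directions from Theorem~\ref{theo:lip1} by essentially a restriction/extension argument, the only subtlety being the treatment of the right endpoint $\beta$.

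For the forward direction, assume $\alpha \leq_{S} \beta$. By Theorem~\ref{theo:lip1}, $(L)_{1}$ holds, so we have a Weihrauch-computable, nondecreasing, Lipschitz function $f:(-\infty,\beta)\to(-\infty,\alpha)$ with cofinal image. Fix any rational $s$ with $s<\alpha$ and $s<\beta$ (e.g.\ take $s$ smaller than both if $\alpha,\beta>0$; otherwise translate). Because $f$ is Lipschitz and nondecreasing with supremum $\alpha$ (by (c) and (d)), the limit $\lim_{x\to\beta-0}f(x)$ exists and equals $\alpha$, so we extend $f$ to the closed interval $[s,\beta]$ by $f(\beta):=\alpha$. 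The extended function is still Lipschitz on $[s,\beta]$ with the same constant $L$, and it remains computable in the sense of Weihrauch: given any name of a point $x\in[s,\beta]$ we use the oracle to produce rationals $q_{n}\to x$ within $2^{-n}$, and Lipschitz continuity guarantees $|f(x)-f(q_{n})|\leq L\,2^{-n}$, so a name of $f(x)$ is obtained uniformly (no case split on whether $x<\beta$ or $x=\beta$ is needed). Finally, for (e) pick rationals $r_{n}<\beta$ strictly increasing with $f(r_{n})>\alpha-2^{-n}$; such $r_{n}$ exist by the cofinality (c) of the original $f$, and then $r_{n}\to\beta-0$ and $f(r_{n})\to\alpha-0$ automatically.

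For the backward direction, assume $f:[s,\beta]\to\mathbb{R}$ satisfies (a), (b), (e). Extend $f$ to $\tilde{f}:(-\infty,\beta)\to\mathbb{R}$ by setting $\tilde{f}(x):=f(s)$ for $x<s$ and $\tilde{f}(x):=f(x)$ for $s\leq x<\beta$. Then $\tilde{f}$ is Weihrauch-computable (the constant part is trivially computable, and we can decide uniformly on the oracle whether $x<s$ or $x\geq s$ up to the needed precision, blending the two definitions by continuity at $s$), it is Lipschitz on $(-\infty,\beta)$ with the same constant, and after discarding the finitely many $r_{n}<s$ it still satisfies (e). Hence $\tilde{f}$ witnesses the assertion (a)+(b)+(e), which is the stripped-down form of $(L)_{2}$ noted in the definition. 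Since the proof of $(L)_{2}\Rightarrow\alpha\leq_{S}\beta$ in Theorem~\ref{theo:lip1} uses only (a), (b), and (e) (the codomain restriction and the nondecreasing property (d) play no role there), we conclude $\alpha\leq_{S}\beta$.

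The main obstacle I expect is purely bookkeeping at the endpoint $\beta$ in the forward direction: one must justify that extending $f$ by the limit value $\alpha$ preserves Weihrauch computability on the closed interval even though $\beta$ itself is generally non-computable. This is settled by observing that Lipschitz continuity makes the algorithm \emph{uniform}---a name of any $x\in[s,\beta]$ yields approximations that are pushed through $f$ with proportionally controlled error---so no separate treatment of the point $x=\beta$ is required. Everything else reduces to the already-established equivalence in Theorem~\ref{theo:lip1}.
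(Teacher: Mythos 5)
The forward direction of your argument is sound apart from one glossed detail (see below), but the backward direction rests on a claim that is false: you assert that the proof of $(L)_{2}\Rightarrow\alpha\leq_{S}\beta$ in Theorem~\ref{theo:lip1} ``uses only (a), (b), and (e)'' and that the codomain restriction plays no role there. It does play a role. In that proof, for a rational $q<\beta$ one computes rationals $c_{n}$ decreasing strictly to $f(q)$ from above and searches for some $c_{n}$ lying in the c.e.\ left cut $\{w\in\mathbb{Q}:w<\alpha\}$; this search terminates exactly because $f(q)<\alpha$, which is guaranteed by the codomain $(-\infty,\alpha)$ built into $(L)_{2}$ (the remark ``$(L)_{2}$ is equivalent to (a)+(b)+(e)'' still refers to functions mapping into $(-\infty,\alpha)$; it only says (c) and (d) are redundant). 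In Corollary~\ref{coro:lip1} the given $f$ maps into $\mathbb{R}$, and (a),(b),(e) only yield $|f(q)-\alpha|\leq L(\beta-q)$ (indeed $f(\beta)=\alpha$ by continuity), so $f$ may overshoot $\alpha$ on part of $[s,\beta)$. At any rational $q$ with $f(q)\geq\alpha$ the search of the theorem's proof never halts, the constructed $g$ is undefined there, and Definition~\ref{defi:S-qS} requires $g(q)\downarrow<\alpha$ for \emph{every} rational $q<\beta$. So your reduction of the backward direction to a verbatim quotation of the theorem does not close it. The gap is repairable, but it needs an extra ingredient: for instance, fix a computable sequence $b_{j}\nearrow\beta$ (using that $\beta$ is left-c.e., which the theorem's proof of this implication never needed), so that for a rational $q\in[s,\beta)$ the quantities $b_{i}-q$ are computable positive lower bounds for $\beta-q$; compute a rational $c$ with $|c-f(q)|\leq b_{j_{0}}-q$ for the least $j_{0}$ with $b_{j_{0}}>q$, and dovetail a search for $i\geq j_{0}$ and a rational $w<\alpha$ (enumerated from the left cut) with $w\geq c-(b_{j_{0}}-q)-(L+1)(b_{i}-q)$. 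Since $f(q)\leq\alpha+L(\beta-q)$, the threshold eventually falls strictly below $\alpha$, so the search halts; since $f(q)\geq\alpha-L(\beta-q)$, the output satisfies $\alpha-w\leq(2L+3)(\beta-q)$, giving $\alpha\leq_{S}\beta$ with a constant of roughly $2L+4$ (rationals $q<s$ can be sent to the value chosen for $s$).

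In the forward direction, the statement ``a name of $x\in[s,\beta]$ yields approximations that are pushed through $f$'' hides the only real issue: the rational approximants $q_{n}$ extracted from a name of $x$ may be $\geq\beta$, hence outside the domain $(-\infty,\beta)$ of the function produced by Theorem~\ref{theo:lip1}, so they cannot be fed to the original machine as they stand. This is easily fixed: replace $q_{n}$ by $q_{n}-2^{-n+1}$, which is $<\beta$ and still within $3\cdot2^{-n}$ of $x$, and use the Lipschitz bound (extended to the closed interval by continuity) to control the error; with that amendment, your extension by $f(\beta):=\alpha$ and your choice of $\{r_{n}\}$ via cofinality (taking rational $r_{n}$ slightly above the points cofinality provides, using monotonicity) are correct.
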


\begin{coro} \label{coro:lip2}
Suppose that $\alpha$ and $\beta$ are left-c.e. reals. 
Then $\alpha \leq_{S} \beta$ if and only if there exists a function 
$f: (-\infty, \beta) \to (-\infty, \alpha)$ of the following properties. 

{\rm (a)} $f$ is computable in the sense of Weihrauch. 

{\rm (e)} There exists a strict increasing sequence of rationals $\{ r_{n} \}$ such that $r_{n} \to \beta - 0$ and $f(r_{n}) \to \alpha - 0$. 
Here, $\{ r_{n} \}$ may be non-computable.

{\rm (b${}^{\prime}$)} The condition for Lipschitz continuity holds whenever the larger point is some $r_{n}$. More precisely, there exists a positive real number $L$ such that for any $x  < \beta$ and any natural number $n$, 
if $x < r_{n}$ then $(f(r_{n}) - f(x)) \leq L (r_{n} - x)$. 
\end{coro}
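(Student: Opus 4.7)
The forward implication is essentially free: if $\alpha \leq_{S} \beta$, then by Theorem~\ref{theo:lip1} assertion $(L)_{2}$ holds, furnishing a function $f:(-\infty,\beta)\to(-\infty,\alpha)$ together with a strict increasing sequence of rationals $\{ r_{n} \}$ with $r_{n}\to\beta-0$ and $f(r_{n})\to\alpha-0$, where $f$ is Weihrauch computable and Lipschitz on its whole domain with some constant $L$. Conditions (a) and (e) of the corollary are built into $(L)_{2}$, and condition (b${}^{\prime}$) is a weakening of full Lipschitz continuity, so it holds with the same $L$. Hence the same pair $(f, \{ r_{n} \})$ serves as a witness.

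For the converse, my plan is to imitate the proof of $(L)_{2} \Rightarrow \alpha \leq_{S} \beta$ inside Theorem~\ref{theo:lip1}, observing that the only place where Lipschitz continuity is actually invoked there is at pairs $(q, r_{n})$ with $q < r_{n}$ --- which is precisely what (b${}^{\prime}$) supplies. Given a rational $q < \beta$, the fact that $f$ is Weihrauch computable and $q$ is rational makes $f(q)$ a computable real, so one can effectively produce rationals $c_{n}\searrow f(q)$ (using the binary expansion of $f(q)$ and truncating just past a ``$0$'' bit, exactly as in Theorem~\ref{theo:lip1}). Running this enumeration in parallel with an enumeration of the left cut of $\alpha$, I would search for the first $n$ with $c_{n} < \alpha$ and set $g(q) := c_{n}$. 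This defines a partial computable function $g:\mathbb{Q}\to\mathbb{Q}$ with $f(q) < g(q) < \alpha$.

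To bound $\alpha - g(q)$ in terms of $\beta - q$, fix $q$ and look at large $n$. Since $g(q) < \alpha$ and $f(r_{n}) \to \alpha - 0$, eventually $g(q) \leq f(r_{n})$; since $r_{n} \to \beta$ and $q < \beta$, eventually $q < r_{n}$. For such $n$ we have $f(q) \leq g(q) \leq f(r_{n})$, hence
\begin{equation}
0 \leq f(r_{n}) - g(q) \leq f(r_{n}) - f(q) \leq L (r_{n} - q),
\end{equation}
where the last inequality is exactly condition (b${}^{\prime}$) applied with $x = q$. Letting $n \to \infty$ yields $\alpha - g(q) \leq L(\beta - q)$, and choosing any positive integer $d > L$ gives the strict inequality $\alpha - g(q) < d(\beta - q)$ required by the definition of Solovay reduction.

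I do not expect a serious obstacle: the role of this corollary is to isolate precisely the portion of Lipschitz continuity actually used in one direction of Theorem~\ref{theo:lip1}. The one point requiring a little care is that $\{ r_{n} \}$ is not assumed computable, so it must not enter the definition of $g$; it appears only in the verification, where it is used to pass to the limit, which is harmless.
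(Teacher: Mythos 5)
Your proposal is correct and matches the paper's own argument: the forward direction is read off from Theorem~\ref{theo:lip1} (full Lipschitz continuity trivially implies (b${}^{\prime}$)), and the converse is exactly the paper's proof of $(L)_{2} \Rightarrow \alpha \leq_{S} \beta$, which, as you observe, only ever invokes the Lipschitz bound at pairs $(q, r_{n})$ and never uses the sequence $\{r_{n}\}$ in the definition of $g$. This is precisely what the paper means by ``the proof of $(L)_{2} \Rightarrow \alpha \leq_{S} \beta$ works in the present setting.''
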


\begin{proof}
In the proof of Theorem~\ref{theo:lip1}, the proof of ${(L)}_{2} \Rightarrow \alpha \leq_{S} \beta$ works in the present setting. 
\end{proof}

We can extend Corollary~\ref{coro:lip2} to the case of qS-reduction and H\"{o}lder continuity. 

%%%%%%%%%%%%%%%%%%%%%%%%%%%%%%%%
\begin{lemm} \label{lemm:lip2}
Suppose that $\alpha$ and $\beta$ are left-c.e. reals. 
Then $\alpha \leq_{qS} \beta$ if and only if there exists a function 
$f: (-\infty, \beta) \to (-\infty, \alpha)$ of the following properties. 

{\rm (a)} $f$ is computable in the sense of Weihrauch. 

{\rm (e)} There exists a strict increasing sequence of rationals $\{ r_{n} \}$ 
 such that $r_{n} \to \beta - 0$ and $f(r_{n}) \to \alpha - 0$. 
Here, $\{ r_{n} \}$ may be non-computable.

{\rm (b${}^{\prime\prime}$)} The condition for H\"{o}lder continuity holds whenever the larger point is some $r_{n}$. More precisely, there exists a positive real number $H$ and a positive integer $\ell$ such that for any $x  < \beta$ and any natural number $n$, 
if $x < r_{n}$ then $(f(r_{n}) - f(x))^{\ell} \leq H (r_{n} - x)$. 
\end{lemm}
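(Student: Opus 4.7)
My plan is to establish both directions, adapting the two halves of Theorem~\ref{theo:lip1} to the qS/H\"{o}lder setting. For the converse direction ($\exists f \Rightarrow \alpha \leq_{qS} \beta$), I would follow the proof of $(L)_{2} \Rightarrow \alpha \leq_{S} \beta$ almost verbatim, only replacing the final Lipschitz estimate by the H\"{o}lder one: given a rational $q < \beta$, use the Weihrauch-computability of $f$ to enumerate rational upper approximations $c_{m}$ of $f(q)$, and let $g(q)$ be the first such $c_{m}$ detected in the c.e.\ left cut of $\alpha$. For sufficiently large $n$ one has $g(q) \leq f(r_{n})$, so (b${}^{\prime\prime}$) applied to the pair $(q, r_{n})$ yields
\[
(f(r_{n}) - g(q))^{\ell} \leq (f(r_{n}) - f(q))^{\ell} \leq H(r_{n} - q).
\]
Letting $n \to \infty$ gives $(\alpha - g(q))^{\ell} \leq H(\beta - q)$, and replacing $H$ by any integer $d > H$ delivers the strict inequality required by Definition~\ref{defi:S-qS}.

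For the forward direction ($\alpha \leq_{qS} \beta \Rightarrow \exists f$), let $\langle f_{0}, d, \ell\rangle$ be a qS witness. I would mimic the construction of $\alpha \leq_{S} \beta \Rightarrow (L)_{1}$ in Theorem~\ref{theo:lip1}, replacing the triangular regions $D_{w,z}$ by the curved regions
\[
D^{(\ell)}_{w,z} = \{(x,y) \in \mathbb{R}^{2} : -d^{1/\ell}(w-x)^{1/\ell} + z \leq y \leq z\}
\]
dictated by qS. Fixing a computable sequence $\{b_{n}\} \nearrow \beta$, I would inductively select a subsequence $b'_{0} < b'_{1} < \cdots$: having chosen $b'_{0},\ldots,b'_{n}$, set $b'_{n+1} = b_{N}$ for the least $N$ such that every previous $(b'_{j}, f_{0}(b'_{j}))$ lies in the interior of $D^{(\ell)}_{b_{N}, f_{0}(b_{N})}$. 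Such $N$ exists because qS places each $(b'_{j}, f_{0}(b'_{j}))$ strictly inside $D^{(\ell)}_{\beta, \alpha}$ and $(b_{N}, f_{0}(b_{N})) \to (\beta, \alpha)$; the inclusion $(f_{0}(b_{N}) - f_{0}(b'_{j}))^{\ell} < d(b_{N} - b'_{j})$ is a decidable rational inequality, so the search is effective. This guarantees $f_{0}(b'_{k}) - f_{0}(b'_{j}) < d^{1/\ell}(b'_{k} - b'_{j})^{1/\ell}$ for all $j < k$. Now take $f$ to be the piecewise linear interpolant through the points $(b'_{n}, f_{0}(b'_{n}))$, extended as the constant $f_{0}(b'_{0})$ on $(-\infty, b'_{0}]$, and set $r_{n} = b'_{n}$; properties (a) and (e) are then immediate.

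The main obstacle is verifying (b${}^{\prime\prime}$), because the sparse-selection bound only controls $f$ at the breakpoints and $(a+b)^{1/\ell}$ is not dominated by $a^{1/\ell} + b^{1/\ell}$ when $\ell > 1$. For $b'_{j} \leq x < b'_{j+1}$ and $r_{n} = b'_{n}$ with $j+1 \leq n$, I would use the decomposition
\[
f(r_{n}) - f(x) = \bigl[f_{0}(b'_{n}) - f_{0}(b'_{j+1})\bigr] + \bigl[f_{0}(b'_{j+1}) - f_{0}(b'_{j})\bigr]\,\frac{b'_{j+1} - x}{b'_{j+1} - b'_{j}},
\]
bound the first bracket by $d^{1/\ell}(b'_{n} - b'_{j+1})^{1/\ell}$ via sparse selection, and bound the second term by $d^{1/\ell}(b'_{j+1} - x)^{1/\ell}$ via the elementary estimate $(b'_{j+1} - b'_{j})^{1/\ell - 1}(b'_{j+1} - x) \leq (b'_{j+1} - x)^{1/\ell}$, valid since $1 - 1/\ell \geq 0$ and $b'_{j+1} - x \leq b'_{j+1} - b'_{j}$. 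Applying $a^{1/\ell} + b^{1/\ell} \leq 2(a+b)^{1/\ell}$ (valid for $\ell \geq 1$) then gives $f(r_{n}) - f(x) \leq 2 d^{1/\ell}(r_{n} - x)^{1/\ell}$, so (b${}^{\prime\prime}$) holds with $H = 2^{\ell} d$; the case $x \leq b'_{0}$ is handled separately by the sparse-selection bound between $b'_{0}$ and $b'_{n}$.
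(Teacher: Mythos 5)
Your proposal is correct, and its skeleton is the same as the paper's: the converse direction is the paper's own adaptation of the $(L)_2 \Rightarrow \alpha \leq_S \beta$ argument (upper rational approximations of $f(q)$ intersected with the c.e.\ left cut of $\alpha$, then passing to the limit along $\{r_n\}$), and your regions $D^{(\ell)}_{w,z}$ and breakpoints $b'_n$ are exactly the paper's $E_{w,z}$ and $R_i$, with $f$ the piecewise linear interpolant and $r_n$ the breakpoints. The one place you genuinely diverge is the verification of (b$''$) at a point $x$ lying strictly inside a segment: the paper argues geometrically that $(x, h(x))$ is still an interior point of $E_{b^{\ast}_n, f(b^{\ast}_n)}$ (which rests on the convexity of that region, its lower boundary $y = z - d^{1/\ell}(w-x)^{1/\ell}$ being a convex function of $x$, so segments between interior points stay inside), giving the estimate with constant $H = d$; you instead prove it algebraically via the two-term decomposition at $b'_{j+1}$, the elementary bound $(b'_{j+1}-b'_j)^{1/\ell-1}(b'_{j+1}-x) \leq (b'_{j+1}-x)^{1/\ell}$, and $a^{1/\ell}+b^{1/\ell} \leq 2(a+b)^{1/\ell}$, giving $H = 2^{\ell}d$. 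The constant is worse but the lemma only asks for some $H$, so this is harmless, and your computation makes explicit an estimate the paper leaves to the geometry; conversely, the paper's interior-point argument is shorter and yields the sharper constant. Two small points worth stating explicitly in your write-up: interiority of $(b'_j, f_0(b'_j))$ in $D^{(\ell)}_{b_N, f_0(b_N)}$ also requires $f_0(b'_j) < f_0(b_N)$ (equivalently, pass as the paper does to a subsequence with strictly increasing $f_0$-values), and $r_n \to \beta$ holds because the selected indices $N$ must strictly increase (each new breakpoint must have the previous one in the interior of its region), so the $b'_n$ exhaust the tail of $\{b_N\}$.
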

%%%%%%%%%%%%%%%%%%%%%%%%%%%%%%%%

%%%%%%%%%%%%%%%%%%%%%%%%%%%%%%%%
\begin{proof}
The above assertion implies $\alpha \leq_{qS} \beta$: 
The proof is very similar to the counterpart in the proof of Theorem~\ref{theo:lip1}. 

%%%%%%%%%%%
$\alpha \leq_{qS} \beta$ implies the above assertion: 
Suppose $\alpha \leq_{qS} \beta $. There exist a partial computable function $f$ and positive integers $d$ and $\ell$ with the following properties. 
For each rational $q < \beta$, $f(q) \downarrow < \alpha$ and we have 
$(\alpha - f(q))^{\ell} < d (\beta - q)$. 
We are going to modify the proof of ``$\alpha \leq_{S} \beta \Rightarrow (L)_{1}$'' (a part of Theorem~\ref{theo:lip1}). 
As before, let $\{ b_{n} \}_{n}$ be a computable sequence of rationals that increasingly converges to $\beta$, and assume that $f(b_{n}) < f(b_{n+1})$. 
We investigate the following new region $E_{w,z}$ in place of $D_{w,z}$. 
\begin{equation}
E_{w,z} =
\{ (x,y) \in \mathbb{R}^{2} : 
-d^{1/\ell} (w-x)^{1/\ell} +z \leq y \leq z \}
\end{equation}

Let $R_{i} (i \in \mathbb{N})$ be the new $Q_{i} (i \in \mathbb{N})$ defined by means of $E_{w,z}$. By connecting $R_{i}$s we get a line graph, and we define $h(x)$, the counterpart to $g(x)$, by this line graph. For each natural number $i$, let $(b^{\ast}_{i}, f( b^{\ast}_{i} ))$ be the coordinate of $R_{i}$. 
By our definition, $b^{\ast}_{0} = b_{0}$. 
In Figure~\ref{fig:graph_ps2hol}, the horizontal line leading to $(\beta, \alpha)$ denotes the line $y = \alpha, x \leq \beta$. The curve starting from $(\beta, \alpha)$ denotes the curve  $y=-d(\beta -x)^{1/\ell} + \alpha $.  The region between the horizontal line and the curve is $E_{\beta, \alpha}$. The thick half line of Figure~\ref{fig:graph_ps2hol} is the set $\{ (x, f(b_{0})) : x \leq b_{0 }\}$.  

%%%%%
\begin{figure}[htb] 
\centering
\includegraphics[width=.5\textwidth]{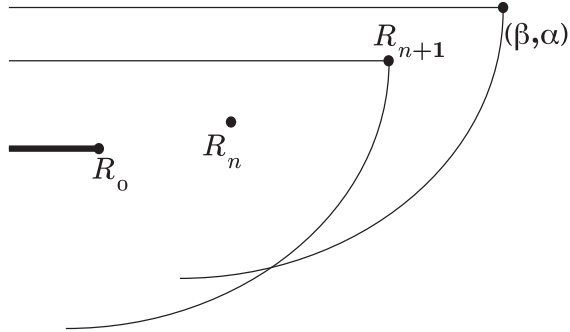}
\caption{Construction of the graph of $h$\label{fig:graph_ps2hol}}
\end{figure}
%%%%%

Suppose that $n$ is a natural number and $x$ is a real number such that $x < b_n^{\ast}$. 
In the case of $n=0$, it holds that $h(b_n^{\ast}) = h(x) = f(b_{0})$. 
Therefore the inequality $( h(b_n^{\ast}) - h(x) )^{\ell} \leq d (b_n^{\ast} - x)$ is apparent.

If $n > 0$ then all of the $R_{i}$s $(i = 0, \dots, n-1)$ are internal points of $E_{b^{\ast}_{n}, f( b^{\ast}_{n} )}$. There are two cases. Case 1: Point $(x, h(x))$ is on the half line ``$x \leq b^{\ast}_{0}$ and $y = f( b^{\ast}_{0} ) $''. Case 2: Point $(x, h(x))$ is on line segment $R_{i}R_{i+1}$ for some $i < n$. In the both cases, point $(x, h(x))$ is an internal point of  $E_{b^{\ast}_{n}, f( b^{\ast}_{n} )}$. Therefore the inequality $( h(r_{n}) - h(x) )^{\ell} \leq d (r_{n} - x)$ holds. 
\end{proof}

%%%%%%%%%%%%%%%%%%%%%%%%%%%

Now we state and prove the main theorem.

%%%%%%%%%%%%%%%%%%%%%%%%%%%%%%%%
\begin{theo} \label{theo:hol1}
Suppose that $\alpha$ and $\beta$ are left-c.e. real numbers. 
Then $\alpha \leq_{qS} \beta$ holds if and only if there exists a function $g : [0,\beta) \to [0,\alpha)$ with the following properties. 
\begin{enumerate}
\item $g$ is nondecreasing. 

\item $g$ is computable in the sense of Weihrauch. 

\item If $q \to \beta$ then $g(q) \to \alpha$. 

\item $g$ is H{\"o}lder continuous with positive order $\leq 1$. 
\end{enumerate}
\end{theo}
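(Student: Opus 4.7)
The plan is to reduce both directions of the theorem to Lemma~\ref{lemm:lip2}. The reverse implication is nearly immediate: fix any strictly increasing rational sequence $\{r_n\}$ with $r_n \to \beta$; then $g(r_n) \to \alpha$ by property (3), and global H\"older continuity (4) trivially implies the weaker condition (b${}^{\prime\prime}$) of Lemma~\ref{lemm:lip2}. That lemma then yields $\alpha \leq_{qS} \beta$.

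For the forward direction, I apply Lemma~\ref{lemm:lip2} to obtain a function $h:(-\infty,\beta) \to (-\infty,\alpha)$ satisfying (a), (e) and (b${}^{\prime\prime}$). Inspecting the proof of that lemma, $h$ is built as the polygonal line joining points $R_i = (b_i^*, f(b_i^*))$ with $b_i^* \nearrow \beta$ and $f(b_i^*) \nearrow \alpha$, so $h$ is automatically nondecreasing and $h(x) \to \alpha$ as $x \to \beta - 0$. Discarding finitely many initial $b_i^*$ to ensure $f(b_0^*) \geq 0$ (possible whenever $\alpha > 0$, the only nontrivial case), the restriction $g := h|_{[0,\beta)}$ maps into $[0,\alpha)$ and already satisfies properties (1), (2), (3), together with the restricted H\"older condition (b${}^{\prime\prime}$) witnessed by the sequence $r_n = b_n^*$.

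The main work is upgrading (b${}^{\prime\prime}$) to global H\"older continuity with exponent $\xi = 1/\ell$. I split the argument into two cases. In Case~A, $x_1 < x_2$ lie in the same linear segment $[b_i^*, b_{i+1}^*]$; applying (b${}^{\prime\prime}$) with $r = b_{i+1}^*$ and $x = b_i^*$ bounds the slope on this segment by $d^{1/\ell}(b_{i+1}^* - b_i^*)^{\xi - 1}$, so multiplying by $x_2 - x_1 \leq b_{i+1}^* - b_i^*$ and using $1 - \xi \geq 0$ yields $|g(x_2) - g(x_1)| \leq d^{1/\ell}(x_2 - x_1)^\xi$. In Case~B, $x_1$ and $x_2$ lie in distinct segments; I choose the first breakpoint $b_j^*$ with $x_1 < b_j^* \leq x_2$, apply (b${}^{\prime\prime}$) on $[x_1, b_j^*]$ and the Case~A bound on $[b_j^*, x_2]$, then combine via the concavity inequality $u^\xi + v^\xi \leq 2^{1-\xi}(u+v)^\xi$ to get $|g(x_2) - g(x_1)| \leq 2^{1-\xi} d^{1/\ell}(x_2 - x_1)^\xi$. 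Setting $H = 2^{1-\xi} d^{1/\ell}$ then delivers property (4).

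The main obstacle is Case~A: on a single linear piece the slope can be arbitrarily large as the segment shrinks, so naive linearity seems incompatible with any uniform H\"older bound. The rescue is the built-in exponent $1/\ell$ inside (b${}^{\prime\prime}$), which forces the slope to decay like $(b_{i+1}^* - b_i^*)^{\xi - 1}$; since Case~A forces $x_2 - x_1 \leq b_{i+1}^* - b_i^*$ and $\xi - 1 \leq 0$, the exponents conspire exactly in our favor.
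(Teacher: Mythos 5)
Your route is genuinely different from the paper's, and its core idea is sound. The paper does not try to show that the polygonal function $h$ of Lemma~\ref{lemm:lip2} is itself H\"older; instead it replaces each linear piece of $h$ on $[r_n,r_{n+1}]$ by an arc of a translated power curve $g_n(x)=A_n-d(t_n-x)^{s}$ with $s=1/\ell$, glued at the breakpoints, and then verifies H\"older continuity of the glued function via the inequality $w^{s}-z^{s}\leq (w-z)^{s}$ inside one piece and the three-term inequality $a^{s}+b^{s}+c^{s}\leq 3^{1-s}(a+b+c)^{s}$ across pieces, getting the constant $3^{1-s}d$. Your observation in Case~A is correct: condition (b${}^{\prime\prime}$) applied to two consecutive breakpoints bounds the slope of the $i$-th segment by $d^{1/\ell}(b^{*}_{i+1}-b^{*}_{i})^{\xi-1}$, and since within a segment $x_2-x_1\leq b^{*}_{i+1}-b^{*}_{i}$ and $\xi-1\leq 0$, the chord estimate $|g(x_2)-g(x_1)|\leq d^{1/\ell}(x_2-x_1)^{\xi}$ follows. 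This buys a noticeably shorter argument: no solutions $t_n$, no verification that they are computable reals, no computability analysis of the curved pieces, and the case $\ell=1$ is handled uniformly rather than delegated to Theorem~\ref{theo:lip1}. Your treatment of the ``if'' direction and of the bookkeeping (restriction to $[0,\beta)$, discarding finitely many breakpoints to land in $[0,\alpha)$) is at the same level of informality as the paper's own ``immediate from Lemma~\ref{lemm:lip2}''.

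The one step that fails as written is Case~B. You split at the \emph{first} breakpoint $b^{*}_{j}$ with $x_1<b^{*}_{j}\leq x_2$ and then invoke ``the Case~A bound on $[b^{*}_{j},x_2]$''; but Case~A applies only when both points lie in a single linear segment, and $x_2$ may lie several segments beyond $b^{*}_{j}$, so the invocation is illegitimate precisely in the generic situation where more than one breakpoint separates $x_1$ from $x_2$. The repair stays entirely within your framework: split instead at the \emph{last} breakpoint $b^{*}_{m}\leq x_2$ (it exists and satisfies $x_1<b^{*}_{m}$ because the two points lie in distinct segments). Then $[b^{*}_{m},x_2]$ lies in one segment, so Case~A applies there, while on $[x_1,b^{*}_{m}]$ the larger endpoint is a breakpoint, so (b${}^{\prime\prime}$) applies directly no matter how many breakpoints lie in between; your concavity inequality then gives
\begin{equation}
g(x_2)-g(x_1)\ \leq\ d^{1/\ell}\bigl((x_2-b^{*}_{m})^{\xi}+(b^{*}_{m}-x_1)^{\xi}\bigr)\ \leq\ 2^{1-\xi}d^{1/\ell}(x_2-x_1)^{\xi}.
\end{equation}
Alternatively, the three-piece split $[x_1,b^{*}_{j}]$, $[b^{*}_{j},b^{*}_{m}]$, $[b^{*}_{m},x_2]$ with the three-term inequality, exactly as in the paper's Case~2, yields the constant $3^{1-\xi}d^{1/\ell}$. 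With that correction your proof is complete.
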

%%%%%%%%%%%%%%%%%%
\begin{proof}
The ``if'' direction ($\Leftarrow$) is immediate from Lemma \ref{lemm:lip2}. 
We are going to prove ``only if'' direction ($\Rightarrow$). 
Suppose that $\alpha \leq_{qS} \beta$. 
Suppose that a partial computable function $f$ and positive integers $d$ and $\ell$ satisfy the following property. For each rational number $q < \beta$ it holds that $f(q) \downarrow < \alpha$ and $(\alpha - f(q))^{\ell} < d (\beta - q)$. 
In the case where $\ell = 1$ the assertion reduces to Theorem~\ref{theo:lip1}. 
Throughout the rest of the proof, we assume $\ell \geq 2$. 
Assume that sequence $\{ r_{n} \}$ ($\nearrow \beta$) and 
Weihrauch computable function $h$ are those constructed in the proof of Lemma~\ref{lemm:lip2}. 
We construct a new function $g$ based on $h$. 

Let $s = 1 /\ell$. By our assumption of $\ell \geq 2$, we have $0 < s < 1$. 
For every natural number $n$, the following holds. 
%%%
\begin{equation} \label{eq:hol1thefunctionh}
\forall k < n ~ [h(r_n) - h(r_k) < d(r_n - r_k)^s]
\end{equation}
%%%

We are going to define a real number $t_n$ as a solution of the following equation in variable $x$. 
%%%
\begin{equation} \label{eq:hol1tnequation1}
h(r_n) + d(x - r_n)^s = h(r_{n+1}) + d(x - r_{n+1})^s
\end{equation}
%%%

The equation \eqref{eq:hol1tnequation1} has a solution $> r_{n+1}$ by the following reason. The equation \eqref{eq:hol1tnequation1} is equivalent to the following. 
%%%
\begin{equation} \label{eq:hol1tnequation2}
d(x - r_n)^s - d(x - r_{n+1})^s = h(r_{n+1}) - h(r_n)
\end{equation}
%%%
For a while, let $h_{L} (x)$ denote the left-hand side of \eqref{eq:hol1tnequation2}. 
On the one hand, $h_{L} (r_{n+1}) = d (r_{n+1} - r_{n})^{s}$ is, by \eqref{eq:hol1thefunctionh}, larger than $h(r_{n+1}) - h(r_n) > 0$. 
On the other hand, $h_{L}(x) \to 0+$ if $x \to \infty$, which is verified by means of L'Hospital's rule. 
Hence, by the intermediate value theorem, 
 \eqref{eq:hol1tnequation1} has a solution $> r_{n+1}$. 
Let $t_{n}$ be the solution. 

We define $A_{n}$ as to be $h(r_{n+1}) + d(t_n - r_{n+1})^s$. 
Thus $A_{n} =h(r_{n+1}) + d(t_n - r_n)^s$. 
By means of $t_n$ and $A_n$, we define a function $g_n$ on the closed interval $[r_n,r_{n+1}]$. 
%%%
\begin{equation} \label{eq:hol1tnequation3}
g_n(x) := A_n -d(t_n - x)^s.
\end{equation}
%%%

It is immediate that $g_{n} (r_{n+1}) = g_{n+1} (r_{n+1})$. 
We define a continuous function $g$ by connecting the graphs of $g_n$($n \in \mathbb{N}$). 
It is not hard to see that $t_{n}$ is a computable real number. Therefore we know that $g$ is a computable function in the sense of Weihrauch. Thus the assertion (2) of the theorem holds. 
Of course, $g$ is nondecreasing, and $g(x)$ approaches to $\alpha = \lim_{j \to \infty} f(r_{j})$ when $x \to \beta$. Thus the assertions (1) and (3) of the theorem hold. 

Now we are going to show that 
$g$ is H{\"o}lder continuous with positive order $\leq 1$. 
Given a positive real number $\varepsilon$ and for each positive real numbers $x,y$ such that $ y=x+\varepsilon < 1$, 
we are going to show that $g(y) - g(x) < 3 d \varepsilon^s$. 

Case 1: $x$ and $y$ are in the same interval. To be more precise, $x,y \in [r_n,r_{n+1})$ for some $n \in \mathbb{N}$. 
%%%
\begin{align} \label{eq:hol1tnequation4}
g(y) - g(x) &= g_{n} (x+\varepsilon) - g_{n}(x) \notag \\ 
&= (A_{n} - d (t_{n} - x - \varepsilon)^{s}) - (A_{n} - d (t_{n} - x)^{s}) \notag \\
&= d( (t_{n} - x)^s - (t_{n} - x - \varepsilon)^s ) 
\end{align}
%%%
By means of the inequality $0 < s < 1$, 
it is not hard to see that for any real numbers $z$ and $w$, 
if $0 < z < w$ then $w^s - z ^s \leq (w - z)^s$. 
In particular, the last formula of \eqref{eq:hol1tnequation4} is at most 
$d\varepsilon^s$. 
In summary, it holds that $g(y) - g(x) \leq d \varepsilon^{s}$.

Case 2: otherwise. Then for some $k$ and $n$ such that $k < n$, 
it holds that $r_k \leq x < r_{k+1} < r_{n} \leq y < r_{n+1}$. 
Let $a := r_{k+1} - x$, $b := r_{n+1} - r_{n+1}$ and $c := y - r_{n+1}$. 

The inequalities \eqref{eq:hol1tnequation5} and \eqref{eq:hol1tnequation7} reduce to Case 1. 
Recall that $g$ and $h$ have the same value at an end point of each interval, that is, $g(r_{j}) = h(r_{j})$ for each natural number $j$. 
Hence the inequality \eqref{eq:hol1tnequation6} reduces to \eqref{eq:hol1thefunctionh}. 

\begin{eqnarray}
g(r_{k+1}) - g(x) < da^s \label{eq:hol1tnequation5}
\\
g(r_n) - g(r_{k+1}) < db^s \label{eq:hol1tnequation6}
\\
g(y) - g(r_n) < dc^s \label{eq:hol1tnequation7}
\end{eqnarray}

Therefore, we have the following. 
\begin{equation} \label{eq:hol1tnequation8}
g(y) - g(x) < d(a^s + b^s + c^s)
\end{equation}

In order to complete Case 2, we are going to employ H{\"o}lder's inequality \cite{BB1961}. 
Under the assumption of $p,q >1$ and $1/p + 1/q = 1$, 
for any nonnegative real numbers $a_1,a_2,a_3,b_1,b_2$ and $b_3$, 
the following holds. 
\begin{equation} \label{eq:holderineq}
a_1b_1 + a_2b_2 + a_3b_3 \leq (a_1^p + a_2^p + a_3^p)^{1/p}(b_1^q + b_2^q + b_3^q)^{1/q}
\end{equation}

For our purpose, we investigate the case where $1/p := s$, $1/q := 1 - s$, $a_1 := a^s$, $a_2 := b^s$, $a_3 := c^s$, and $b_1 = b_2 =b_3 := 1$. 
In this particular setting, the inequality \eqref{eq:holderineq} is the following. 
\begin{equation}
a^s + b^s + c^s \leq 3^{1-s}(a + b + c)^s.
\end{equation}
Hence, we have the following. 
\begin{align}
g(y) - g(x) &< 3^{1-s}d(a + b + c)^s\\ \notag
&= 3^{1-s}d\varepsilon^s
\end{align}
Thus in the both Cases, we have $g(y) - g(x) < 3 d \varepsilon^s$. 
This completes the proof of the assertion (4).
\end{proof}
%%%%%%%%%%%%%%%%%%
\bigskip

\end{document}